\documentclass[twoside]{article}
\usepackage{setspace}
\usepackage{amsfonts, amsthm, amsmath, amssymb}
\usepackage{ulem}
\usepackage{float}
\usepackage{comment}
\usepackage{graphicx}
\usepackage[caption = false]{subfig}
\usepackage{epstopdf}
\usepackage[ruled,vlined,linesnumbered]{algorithm2e}
\usepackage{setspace}
\usepackage[shortlabels]{enumitem}
\usepackage{amsopn}
\usepackage{bbm}
\usepackage{mathtools}
\usepackage{array,multirow,booktabs}
\usepackage{hyperref}
\usepackage{cleveref}
\usepackage{xcolor}
\usepackage{listings}
\usepackage{lipsum}
\usepackage[top=1in, bottom=1.25in, left=1.25in, right=1.25in]{geometry}
\numberwithin{equation}{section}
\ifpdf
  \DeclareGraphicsExtensions{.eps,.pdf,.png,.jpg}
\else
  \DeclareGraphicsExtensions{.eps}
\fi
\hypersetup{
    colorlinks=true,
    linkcolor=blue,
    filecolor=magenta,      
    urlcolor=cyan,
}

\newcommand{\R}{\mathbbm{R}}

\newcommand{\an}[1]{{\leavevmode\color{red}{\bf #1}}}

\newcommand{\email}[1]{\protect\href{mailto:#1}{#1}}
\newcommand{\bhv}{\boldsymbol{\widehat{v}}}

\newcommand{\bhp}{\boldsymbol{\widehat{p}}}
\newcommand{\bhl}{\boldsymbol{\widehat{\ell}}}

\newcommand{\btp}{\boldsymbol{\widetilde{p}}}
\newcommand{\bc}{\boldsymbol{c}}

\newcommand{\bhc}{\boldsymbol{\widehat{c}}}
\newcommand{\bx}{\boldsymbol{x}}
\newcommand{\bq}{\boldsymbol{q}}

\newcommand{\bA}{\boldsymbol{A}}
\newcommand{\KC}{K_C}
\newcommand{\bs}[1]{\boldsymbol{#1}}
\newcommand{\bd}{\boldsymbol{d}}

\newcommand{\vlc}{v_{\text{LC}}}
\newcommand{\vnc}{v_{\text{NC}}}
\newcommand{\tarc}{\mbox{\large$\frown$}}
\newcommand{\arc}[2][-3ex]{{#2}{\kern #1{\raisebox{1.5ex}{\tarc}}}}

\newtheorem{theorem}{Theorem}[section]
\newtheorem{remark}{Remark}[section]
\newtheorem{corollary}{Corollary}[section]
\newtheorem{lemma}{Lemma}[section]
\newtheorem{proposition}{Proposition}[section]
\theoremstyle{definition}
\newtheorem{definition}{Definition}[section]
\newtheorem{example}{Example}[section]
\newtheorem{assumption}{Assumption}[section]
\DeclareMathOperator*{\argmax}{arg\,max\;}
\DeclareMathOperator*{\argmin}{arg\,min\;}
\title{Non-Dissipative and Structure-Preserving Emulators via Spherical Optimization}
\author{Dihan Dai\thanks{Department of Mathematics, University of Utah, Salt Lake City, UT 84112 (\email{dai@math.utah.edu}, \email{epshteyn@math.utah.edu}).} \thanks{Scientific Computing and Imaging (SCI) Institute, University of Utah, Salt Lake City, UT 84112 (\email{akil@sci.utah.edu}).
D. Dai and A. Narayan are supported by NSF DMS-1848508 and AFOSR FA9550-20-1-0338.
}
\and Yekaterina Epshteyn \footnotemark[1]
\and Akil Narayan\footnotemark[1] \footnotemark[2]}
\begin{document}
\maketitle
\begin{abstract}
  Approximating a function with a finite series, e.g., involving polynomials or trigonometric functions, is a critical tool in computing and data analysis. The construction of such approximations via now-standard approaches like least squares or compressive sampling does not ensure that the approximation adheres to certain convex linear structural constraints, such as positivity or monotonicity. Existing approaches that ensure such structure are norm-dissipative and this can have a deleterious impact when applying these approaches, e.g., when numerical solving partial differential equations.  We present a new framework that enforces via optimization such structure on approximations and is simultaneously norm-preserving. This results in a conceptually simple convex optimization problem on the sphere, but the feasible set for such problems can be very complex. We establish well-posedness of the optimization problem through results on spherical convexity and design several spherical-projection-based algorithms to numerically compute the solution. Finally, we demonstrate the effectiveness of this approach through several numerical examples. 
    
    \noindent\textbf{Key Words:} structure-preserving approximations, ~high-order accuracy, ~quadratic programming, ~geodesic convex optimization
\end{abstract}
\section{Introduction}
Approximating an unknown function with a superposition of basis functions (e.g., polynomials or Fourier series) is a widely-used technique in computing and numerical analysis. For example, when solving a system of partial differential equations (PDEs), the class of spectral methods proposes such a superposition ansatz and determines the coefficients through minimization conditions on the PDE residual. Traditionally, fundamental properties of the approximation, such as stability, accuracy, and computational efficiency are major considerations for the approximations. However, for certain problems, approximations are required to preserve certain implicit ``structures'', i.e., approximations should inherit certain desirable qualitative features of the original function. Such structure can include positivity, monotonicity, conservation of energy, etc. An approximation that fails to be structure-preserving may lead to numerical instability or even the failure of numerical schemes \cite{zhang2011maximum}. From the broader viewpoint of building predictive emulators from data, this structure can be crucial to generate a meaningful emulator; for example, emulators built to predict population trends should not predict negative values.  In this manuscript we consider building approximations that respect general families of linear homogeneous convex inequality constraints (for which positivity and monotonicity are examples) along with a single quadratic equality constraint (an energy constraint).

Based on the existing framework for linear inequality constraints \cite{zala2020structure}, we impose a new \textit{spherical} constraint, i.e., a quadratic constraint in addition to the linear constraints. While a seemingly benign addition, this extra constraint substantially changes the optimization problem and its properties. With a coordinate vector $\bhp$ provided (e.g., a vector of Fourier coefficients), the formulation we consider in this paper gives rise to an optimization problem of the form,
\begin{align}\label{eq:optgeneral}
    &\min_{\bhv\in\mathbb{R}^N}\Vert \bhv - \bhp\Vert_2^2\nonumber\\
    \text{s.t. }&g_k(\bhv, y)\le 0,\;\;\forall y\in \omega_k, k\in [K]\\
               &\Vert\bhv\Vert_2 = \Vert\bhp\Vert_2\nonumber,
\end{align}
where the linear constraints are given by the $y$-parameterized scalar-valued functions $g_k(\cdot, y)$ and the energy-preserving constraint is given by the equality constraint $\Vert\bhv\Vert_2 = \Vert\bhp\Vert_2$. The parameters $y$ can take values from a (possibly uncountably infinite) set $\omega_k$, and hence the feasible set can be very complex. Generally, the feasible set in \eqref{eq:optgeneral} is the intersection of a finite collection of homogeneous convex cones and a sphere. The model \eqref{eq:optgeneral} corresponds to a semi-infinite programming (SIP) problem \cite{hettich1993semi}. In several SIP algorithms, a discrete approximation to the domain $\omega_k$ is constructed (and perhaps refined). For linear constraints corresponding to positivity, this would correspond to requiring positivity at only a finite collection of points on the domain and hence structure is only preserved at a discrete set of points instead of on the whole domain. An additional difficulty is that the feasible set is a subset on the surface of a sphere, which is not a convex set in Euclidean space and hence the approaches from \cite{zala2020structure} do not apply.  Thus, computationally solving \eqref{eq:optgeneral} can be very challenging. 

\subsection{Related problems and approaches}
There is existing literature on the study of optimization over ellipsoids (or spheres), which is closely related to the solution of the subproblems in the class of trust-region methods \cite{gander1989constrained,hager2001minimizing,rendl1997semidefinite,sorensen1997minimization}. However, in those approaches, the number of linear constraints is finite, and therefore such approaches are not directly applicable in our setting.

There are existing energy-preserving numerical methods that focus on energy-conservation of a Hamiltonian system, where a differential equation is discretized in a special way so that the energy of the discretized system is preserved, see \cite{quispel2008new,celledoni2009energy,hairer2010energy,besse2021energy}. 
However, our focus is to preserve the energy of the approximation to a given function rather than the energy of a differential equation system, which is a different problem. In addition, methods built for differential equations assume very particular types of discretizations; the formulation we investigate in this paper applies to general discretizations.

A number of techniques have been proposed for preserving special kinds of structure for special choices of basis functions. To ensure positivity preservation, one can simply enforce positivity at a finite collection of points in the computational domain. The corresponding feasible set is a convex polytope and there are several algorithms available to computationally solve this problem \cite{boyd2004convex}. Unfortunately, such techniques do not guarantee positivity of the approximation over the entire domain (a generally uncountable Euclidean set). An alternative to constraints over a finite set is to use special mappings. For example, one can approximate $\sqrt{f}$ and square the resulting approximation, or approximate $\log f$ and subsequently exponentiate the approximation in order to guarantee the resulting approximation is positive. However, such mapping functions are not easy to construct for more complicated constraints, and the introduction of such maps can affect accuracy; for example, $x \mapsto \sqrt{x}$ is not smooth at $x = 0$. For univariate polynomial approximation, one can take advantage of the special representations of nonnegative polynomials (Luk\'{a}cs theorem) to develop more complicated iterative procedures \cite{doi:10.1137/17M1131891}. Another approach is to use an adaptive construction scheme for certain kinds of constraints \cite{berzins2007adaptive}. One can also linearly scale the high order coefficients of the polynomial to limit the oscillations \cite{zhang2011maximum}. Finally we note that there are existing theoretical investigations for structure-preserving approximation in \cite{devore1974degree,beatson1978degree,beatson1982restricted,nochetto2002positivity}, but these investigations do not translate into algorithms. 

Our approach extends the recent technique in \cite{zala2020structure}, which considers building approximations with \textit{linear} structure, in the sense that the constraints are linear with respect to the approximant. In \cite{zala2020structure}, the authors formalize a model for the structure-preserving problem with linear constraints, which applies to general, nontrivial linear structure. Under a mild condition, the corresponding function approximation problem can be cast to a semi-infinite convex optimization problem in a finite-dimensional Euclidean space with a unique solution. In addition, the work in \cite{zala2020structure} develops several projection-based algorithms to preserve the desired structures. However, their method, which amounts to filtering the approximation in a nonlinear manner, does not preserve the $L^2$ norm, and thus is dissipative. The work of this paper preserves the quadratic (energy) norm through a modified formulation of the problem. This slight modification results in nontrivial changes to well-posedness and algorithmic development that we address.

\subsection{Contributions of this paper}
In this work, we are interested in providing theory and algorithms to address non-dissipative, structure-preserving function approximation methods of the form \eqref{eq:optgeneral}. Using notions of spherical convexity and spherical projections \cite{ferreira2013projections,ferreira2014concepts}, we show that the corresponding function approximation problem can be converted to a spherically convex feasibility problem, and establish uniqueness of the solution under mild conditions, see \Cref{thm:uniqueness}. Based on our theoretical results and by extending algorithms in \cite{zala2020structure}, we propose three algorithms to solve the spherically convex feasibility problem; see sections \ref{ssec:greedy}, \ref{ssec:averaging}, and \ref{ssec:hybrid}. Our algorithms do not rely on the discretization of the domain and therefore differ from many existing SIP algorithms \cite{stein2012solve,goberna2013semi}.
 
The setup of the general problem is as follows: We first assume that the unconstrained approximation to the unknown function is available, e.g., an
unconstrained projection of the unknown function onto a finite-dimensional subspace. The unconstrained approximation is then \textit{post-processed} via our algorithms so that the linear constraints, such as positivity, are satisfied without augmenting or reducing the quadratic energy of the approximant.

This paper is structured as follows. In \Cref{sect:setup}, we give the theoretical framework of the structure-preserving function approximation problem as well as formalization of the constraints. In \Cref{sect:soln}, we provide a brief overview of spherical geometry and present the uniqueness result of the function approximation problem. In \Cref{sect:algorithms}, we discuss projections on the sphere and develop two algorithms for solving the function approximation problem. Finally, in \Cref{sect:numeric}, we demonstrate the efficacy of our algorithms with numerical results for polynomial and Fourier series approximations. Our energy- and structure-preserving results show similar rates of convergence as those of the unconstrained approximation as the subspace is refined. 
\section{Setup}\label{sect:setup}
Let $\Omega\subseteq\mathbb{R}^d$ be a spatial domain. Consider the Hilbert space $H$ formed by scalar-valued functions over $\Omega$ with inner product $\langle\cdot,\cdot\rangle_H$,
\begin{align*}
    &H = H(\Omega) \coloneqq \{f:\Omega\to \mathbb{R}\;\;|\;\;\Vert f\Vert_H<\infty\},&\Vert f\Vert^2\coloneqq \langle f,f\rangle_H,
\end{align*}
A prototypical example is $H = L^2(\Omega; \mathbb{R})$. Let $V\subseteq H$ be an $N$-dimensional subspace spanned by orthonormal basis functions $\{v_n\}_{n\in[N]}$,
\begin{align*}
  V &= \text{span}\{v_1,\cdots,v_N\},&\langle v_j,v_k\rangle_H &= \delta_{j,k}, &  j,k&\in [N],
\end{align*}
where $\delta_{j,k}$ is Kronecker delta function, and $[N] \coloneqq \{1,\cdots, N\}$. 
Our numerical examples will be restricted to $d=1$ or $d=2$ on a closed interval or a closed rectangle $\Omega$, respectively, but the theoretical framework we develop holds for general choices of $d$ and $\Omega$. 

We assume throughout this document that $V$ has no common zeros on $\Omega$, i.e., that, 
\begin{align*}
  \forall x \in \Omega \;\; \exists \; v \in V \textrm{ such that } v(x) \neq 0.
\end{align*}
This assumption is true if, for example, $V$ contains constant functions.

\subsection{The Unconstrained Problem -- linear measurements}\label{ssec:unconstrained}
We assume availability of an unconstrained function approximation scheme from $H$ onto $V$ using a finite collection of data. In this section we briefly mention canonical approaches for accomplishing this via linear measurements, but our optimization problem is independent of how this unconstrained approximation is formed.

Let $u \in H$ be a function about which we have a finite number of observations $\{u_m\}_{m\in[M]}\coloneqq \{\phi_m(u)\}_{m\in [M]}\subset \R$,
where $\phi_1,\cdots,\phi_M$ are $M$ linear functionals on $H$, and are bounded on $V$. The functionals can be, e.g., $v_m$-projections $\langle \cdot,v_m\rangle$ or pointwise evaluations $\delta_{x_m}(\cdot)$, where $\delta_{x_m}$ is the Dirac mass centered at $x_m \in \Omega$. An approximation $p \in V$ to $u$ is frequently built by enforcing these linear measurements:
\begin{align}\label{eq:unconstrained}
  \textrm{Find $p= \sum_{n \in [N]} \widehat{p}_n v_n$ satisfying } \bs{A} \bs{\widehat{p}} = \bs{b},
\end{align}
where 
\begin{align}\label{eq:Amatrix}
  (\bA)_{m,n} &= \phi_{m}(v_n), & (m,n) &\in [M] \times [N], & \boldsymbol{b} &= [\phi_1(u),\cdots,\phi_M(u)]^\top\in \mathbb{R}^M.
\end{align}
The condition $M=N$ is necessary for the problem \eqref{eq:unconstrained} to be well-posed, and so in practice one relaxes \eqref{eq:unconstrained} in appropriate ways depending on whether the system is under-/over-determined. For example, with $\bs{\widehat{v}}$ the $v_n$-coordinates of an element $v \in V$, one could relax \eqref{eq:unconstrained} in the following ways:
\begin{equation}\label{eq:unconstrainedrn}
  \begin{aligned}
  (M &> N) & \bhp &= \argmin_{\bhv \in \R^N} \left\| \bs{A} \bhv - \bs{b} \right\|_2 & \textrm{(Least squares)} \\
  (M &= N) & \bhp &= \bs{A}^{-1}\bs{b} & \textrm{(Interpolation)} \\
  (M &< N) & \bhp &= \argmin_{\bhv \in \R^N: \; \bs{A} \bhv = \bs{b}} \|\bs{\widehat{v}}\|_1 & \textrm{(Compressive sampling)}
  \end{aligned}
\end{equation}
where $\|\cdot\|_p$ is the $\ell^p([N])$ norm on vectors. Theory for well-posedness of each of these problems is mature \cite{trefethen_approximation_2012,candes_robust_2006,donoho_compressed_2006,cohen_compressed_2009}. The numerical results in this paper utilize the interpolation ($M= N$) formulation above for simplicity, but this choice is independent of the theory and algorithms developed in this paper. The essential idea is that we assume the ability to construct $\bhp$ that, in the absence of linear inequality or quadratic equality constraints, is considered a good approximation to the original function $u$ based on available data.

\subsection{The Constraints}
In many practical situations, we require not only a solution to \eqref{eq:unconstrained}, but instead a solution that also obeys certain physical constraints, such as positivity over $\Omega$. The unconstrained approximation \eqref{eq:unconstrained} need not obey any such constraints, even if the original function $u$ does obey them, which may lead to unphysical approximations. We therefore consider the problem of imposing these additional constraints. We consider simultaneously imposing two types of constraints: a (possibly uncountable) set of linear constraints, along with a single quadratic constraint.

\subsubsection{The Linear Constraints}
The constraints we consider in this section are motivated by the following examples of structural desiderata:
\begin{itemize}[itemsep=-3pt]
    \item positivity: $p(y)\ge 0$ for all $y\in \Omega$,
    \item monotonicity: $p'(y)\ge 0$ for all $y\in \Omega$,
    \item convexity: $p''(y)\ge 0$ for all $y\in \Omega$.
\end{itemize}
As it is pointed out in \cite{zala2020structure}, these constraints can be characterized by families of linear constraints and a unique solution to the linearly-constrained problem is guaranteed under some mild assumptions. \textit{Linearity} in this context refers to linearity of the constraint with respect to the function $p$ in $V$.
In the rest of this subsection, we briefly review the abstract formulation introduced in \cite{zala2020structure}. 

Assume that there are $K$ types of linear constraints (e.g., $K=2$ if we simultaneously impose positivity and monotonicity). For each $k \in [K]$, each type of linear constraint is a family defined by the condition, 
\begin{align}\label{eq:lconstraintsabs}
  L_k(v,y)&\le 0,&\forall y&\in \omega_k, 
\end{align}
where,
\begin{itemize}[itemsep=-3pt]
    \item $\omega_k$ is a subset of the spatial domain $\Omega$,
    \item $L_k(\cdot,y)$ is a $y$-parameterized \textit{unit-norm} element in the dual space $V^*$.
\end{itemize}
The feasible set of elements $v \in V$ that satisfy \eqref{eq:lconstraintsabs} for the family-$k$ constraint is given by
\begin{align}\label{eq:lconstraints}
    E_k \coloneqq \{v\in V\;\;|\;\;L_k(v,y)\le 0, \;\;\forall y\in \omega_k\}.
\end{align}
Positivity, monotonicity, and convexity can be describes by the abstract formulation \eqref{eq:lconstraintsabs}. The \textit{linear feasible set} $E^0$ is the set of all $v \in V$ that satisfy all $K$ constraints simultaneously, and hence is the intersection of all the $E_k$,
\begin{equation}\label{eq:unionlconstraints}
    E^0 \coloneqq \bigcap_{k\in[K]} E_k = \bigcap_{k\in[K]}\{v\in V\;\;|\;\;L_k(v,y)\le 0, \;\;\forall y\in \omega_k\}.
\end{equation}
Note that $E^0$ is always non-empty since it contains $0$.

Since $V$ is $N$-dimensional, we can identify the feasible set $E^0$ in $V$ with a feasible set in the $v_n$-coordinate space $\R^N$.
By the Riesz representation theorem, for any $L\in V^*$, there exists a unique Riesz representor $\ell\in V$ such that,
\begin{align*}
  L(v) &= \langle v,\ell\rangle_H,& \forall v&\in V.
\end{align*}
The function $\ell\in V$ can also be written explicitly using the orthonormal basis $\{v_n\}_{n\in[N]}$,
\begin{align*}
  \ell(\cdot) &= \sum_{n=1}^N\widehat{\ell}_nv_n(\cdot), & \widehat{\ell}_n &= \langle \ell, v_n\rangle = L(v_n),
\end{align*}
and the following relation holds,
\begin{align*}
    &\Vert L\Vert_{V^{*}} = \Vert \ell\Vert_V = \Vert \bhl\Vert,&&\bhl = (\widehat{\ell}_1,\cdots,\widehat{\ell}_N)^{\mathrm{T}}.
\end{align*}
 In what follows we denote the Riesz representor for $L_k(\cdot, y)$ by $\ell_k(\cdot,y)$ and the corresponding coordinate vector by $\boldsymbol{\widehat{\ell}}_k(y)\in\R^N$. Since $L_k(\cdot, y)$ is unit-norm, we have
 \begin{align}
     &\Vert L_k(\cdot, y)\Vert_{V^{*}} = \Vert \ell_k(y)\Vert_V = \Vert \bhl_k(y)\Vert=1.
 \end{align}
Finally, the set $C_k\subseteq\R^N$ corresponding to the feasible set $E_k \subseteq V$ is given by,
\begin{align}\label{eq:lkconstraintsrn}
  C_k &= \bigcap_{y\in\omega_k}\left\{\bhv\in \mathbb{R}^N\;\;\middle|\;\; \left\langle \bhv, \bs{\widehat{\ell}}_k(y) \right\rangle \leq 0 \right\} \eqqcolon \bigcap_{y\in\omega_k}c_k(y),&k \in [K],
\end{align}
and the set $C^0\subseteq\R^N$ corresponding to $E^0$ is
\begin{align}\label{eq:lconstraintsrn}
    C^0 = \bigcap_{k\in [K]}C_k.
\end{align}
It can be verified that all $C_k, k\in [K]$ are closed, convex cones in $\mathbb{R}^N$ (and that $E_k$ is a closed convex cone in $V$) \cite{zala2020structure} and thus their intersection $C^0$ is also a convex cone. Note that, although $C^0$ is simply a convex cone, the geometry of $C^0$ can be very complicated with infinitely many extreme points since every $C_k$ is the intersection of infinitely many half-spaces $c_k(y)$ if the $\omega_k$ is a set with infinite cardinality, e.g., $\omega_k$ is an interval. 
\begin{remark}\label{rmk:affine}
  In \cite{zala2020structure}, an additional $r_k$ parameter is introduced to define \textit{affine} convex cones as feasible sets. We specialize here to the homogeneous case $r_k = 0$, so that our cones all have vertices at the origin. If $r_k \neq 0$, the problem we consider in this paper is not necessarily well-posed; see Example \ref{ex:nonunique2}.
\end{remark} 
We summarize one example from \cite{zala2020structure} to demonstrate the notation and how it can be specialized to familiar types of constraints.
\begin{example}[Positivity]\label{ex:positivity}
Let $\Omega = [-1,1]$ and $V$ be any $N$-dimensional subspace of $L^2(\Omega)\bigcap L^\infty(\Omega)$. We want to impose a positivity-structure for $v \in V$: $v(x)\ge 0, \forall x\in \Omega$. Thus, only $K = 1$ family is needed and $\omega_1 = \Omega$. Fixing $y \in \omega_1$, the corresponding unit-norm linear operator is given by
\begin{align}\label{eq:positivityconstraint}
  L_1(v,y) &\coloneqq -\lambda(y)v(y), & \lambda(y) &= \left(\sum_{n=1}^Nv_n(y)^2\right)^{-\frac{1}{2}},
\end{align}
  where $\lambda(y)$ is a $y$-dependent normalization factor. The corresponding $y$-parameterized Riesz representor $\ell_1(\cdot, y)$ and its coordinate vector $\bhl_1(y)$ are, respectively,
\begin{align}\label{eq:positivityriesz}
  \ell_1(\cdot,y) &= -\lambda(y)\sum_{n = 1}^Nv_n(y)v_n(\cdot),& 
    \bhl_1(y) &= \left[-\lambda(y)v_1(y),\cdots, -\lambda(y)v_N(y)\right]^\top.
\end{align}
Once the orthonormal basis is specified, $\bhl_1(y)$ can thus be explicitly identified.
\end{example} 
\subsubsection{Constraints that are ``determining"}\label{ssec:determining}
In order to establish uniqueness of the solution to our quadratic-linear constrained problem, we require an additional condition on the linear constraints $(L_k,\omega_k)$ defining $C^0$. 
\begin{definition}\label{def:determining}
  The set of constraints $(L_k, \omega_k)_{k\in [K]}$ is $V$-\textit{determining} if 
  \begin{align*}
    v \in V \textrm{ and } L_k(v,y) = 0 \;\;\forall\; y \in \omega_k \;\;\forall \;k \in [K] 
    \hskip 10pt \Longrightarrow \hskip 10pt 
    v = 0
  \end{align*}
\end{definition}
We will assume $V$-determining linear constraints, which amounts to a technical assumption about the geometry of the associated $\R^N$-feasible set $C^0$ that we later exploit. The $V$-determining condition precludes certain problem setups, but all the practical situations we consider in this paper are $V$-determining. As a simple example, to enforce positivity for every point in $\Omega$ as in \Cref{ex:positivity} we have that $L_1(v, y)$ is a scaled point evaluation at $y$. Therefore, the $V$-determining condition requires that if $v \in V$ satisfies $v(y) = 0$ for every $y \in \Omega$ then $v = 0$, which is a quite natural condition.

For more intuition, the following lists some additional examples, with $\Omega = [-1,1]$, $K=1$, and $\ell_1$ the normalized point-evaluation operator in Example \ref{ex:positivity},
\begin{itemize}[itemsep=-2pt]
  \item If $V = \mathrm{span} \{x^j \;|\; j = 0, \ldots, N-1\}$ and $|\omega_1| \geq N$, then the linear constraint is $V$-determining
  \item If $V = \mathrm{span} \{x^j \;|\; j = 0, \ldots, N-1\}$ and $|\omega_1| < N$, then the linear constraint is \textit{not} $V$-determining
  \item If $V = \mathrm{span} \{x^j \;|\; j = 1, \ldots, N\}$, and $|\omega_1| \leq N$ with $0 \in \omega_1$, then the linear constraint set is \textit{not} $V$-determining.
  \item If $V = \mathrm{span} \{H(x), 1-H(x)\}$, with $H$ the Heaviside function, and $\omega_1 = [-1, 0.5]$, then the linear constraint set is $V$-determining.
  \item If $V = \mathrm{span} \{H(x), 1-H(x)\}$, with $H$ the Heaviside function, and $\omega_1 = [-1, -0.5]$, then the linear constraint set is \textit{not} $V$-determining.
\end{itemize}
Note that the $V$-determining condition is violated only for specialized cases, e.g., either when $\omega_1$ has finite cardinality less than $N$, or when $V$ contains very special types of functions. In all the numerical examples we consider, the linear constraints are $V$-determining.


\subsubsection{The quadratic energy constraint}\label{sect:e-constraint}
In addition to the linear constraints, we further impose a single quadratic norm constraint analogous to an $L^2$-energy of the function. To be precise, we impose that our constrained solution must have same norm as the unconstrained solution, 
\begin{align}\label{eq:hnorm-constraint}
    \Vert v\Vert_H = \Vert p\Vert_H,
\end{align}
where $p$ is the unconstrained solution with coordinate vector $\bhp$ from solving \eqref{eq:unconstrainedrn}. The corresponding discretized constraint set in $\R^N$ is a sphere with radius $\|\bhp\|$,
\begin{align}\label{eq:hnorm-constraintrn}
    C^H \coloneqq \{\boldsymbol{\widehat{v}}\in\mathbb{R}^N|\Vert\bhv\Vert_{\mathbb{R}^N} = \Vert\bhp\Vert_{\mathbb{R}^N}\},
\end{align}
where $\bhv$ is the coordinate vector for $v$. 

We can now state the overall procedure we consider in this paper: 
\begin{enumerate}[1.]
    \item Given data $\{u_m\}_{m \in [M]}$, solve the unconstrained problem \eqref{eq:unconstrainedrn} to obtain the unconstrained solution $\bhp$.
    \item Post-process the unconstrained solution $p$ by solving the constrained problem
    \begin{align}\label{eq:constrainedrn}
    \bd = \argmin_{\bhv\in C}\frac{1}{2}\Vert \bhv - \bhp \Vert^2,
\end{align}
where $C = C^{H}\bigcap C^{0}$. This constrained problem optimizes a quadratic function over a subset $C$ of a sphere in $\R^N$. 
\end{enumerate}
Our focus is on the theory and algorithms for the second step, post-processing the unconstrained solution to obtain a structure-preserving solution. We show in \Cref{thm:uniqueness} that the problem above has a unique solution.
Furthermore, in \Cref{sect:algorithms}, we are able to naturally extend the existing algorithms proposed in \cite{zala2020structure} to the new optimization problem. We end this section with two examples that illustrate why some alternative formulations to the two-step procedure above do not necessarily result in well-posed problems.  
\begin{example}[An alternative formulation with nonunique solutions]\label{ex:nonunique}
One possible alternative to our framework proposed above is to instead consider the following constrained problem
    \begin{align}\label{eq:altconstrainedrn}
    \bd = \argmin_{\bhv\in C}\frac{1}{2}\Vert \bA\bhv - \boldsymbol{b} \Vert^2.
\end{align}
  with $\bA$ and $\boldsymbol{b}$ as introduced in \Cref{ssec:unconstrained}. This formulation incorporates the constraints and a least squares problem simultaneously. However, the solution to this alternative formulation \eqref{eq:altconstrainedrn} is \textit{not necessarily unique}. The issue lies in the fact that if the singular values of the full-rank matrix $\bA$ are not all equal to 1, then the problem corresponds to optimization over an ellipsoid, which can yield non-unique solutions.

For example, consider the case when $N = M = 2$. Let 
$$
\bA = \begin{bmatrix}0.4&0\\0&1\end{bmatrix}, \;\;\boldsymbol{b} = \begin{bmatrix}0 \\ 0.5\end{bmatrix},\;\; C = \left\{(\cos t, \sin t)\;\;\middle|\;\;  0.01\le t\le\pi-0.01 \right\}.
$$
The constrained set $C$ is a spherically convex set (\Cref{def:sconvex}). The loss/cost function associated to \eqref{eq:altconstrainedrn} is
$$
cost(t) = 0.4\cos^2 t + (\sin t - 0.5)^2 = 0.6\sin^2 t - \sin t + 0.65,
$$
which has two distinct global minima over the feasible set $C$ at $t = \arcsin(\frac{5}{6})$ and $t=\pi-\arcsin(\frac{5}{6})$.
\end{example}
\begin{example}[Nonhomogeneous cones as in \Cref{rmk:affine}]\label{ex:nonunique2}
In \Cref{rmk:affine} we mention that in previous work \cite{zala2020structure} an $r_k$ parameter is introduced to include more general linear constraints. If $r_k \neq 0$, then our optimization problem does not necessarily have unique solutions. 
Consider the following problem. Let the feasible set
\begin{align*}
    C = \left\{(x,y)\middle\vert x^2+y^2 = 1\right\}\bigcap\left\{(x,y)\middle\vert y\le 4x+2, y \le -4x+2 \right\}.
\end{align*}
The feasible set consists of two disjoint arcs (red arcs in \Cref{fig:nonunique}). If the unconstrained solution lies in the middle of the dark green arc (the black dot), there will be two solutions to \eqref{eq:constrainedrn}, one from each red arc.
\begin{figure}[htbp]
\centering
\includegraphics[width=0.5\textwidth]{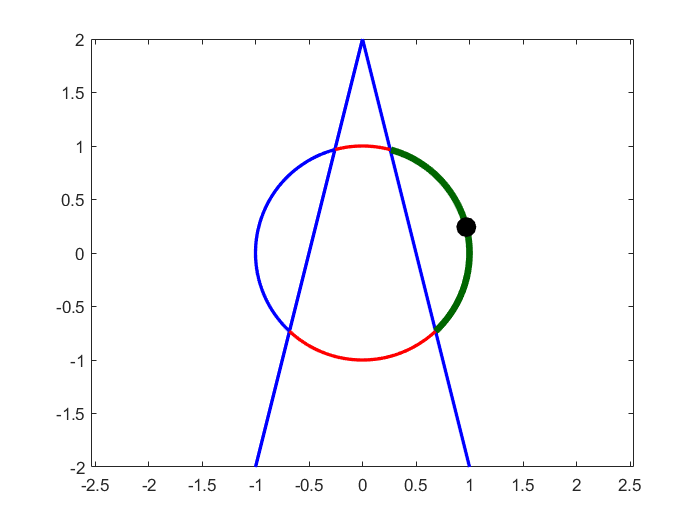}
\caption{An illustration to \Cref{ex:nonunique2}}
\label{fig:nonunique}
\end{figure}
\end{example}
\section{Solution to The Constrained Optimization Problem}\label{sect:soln}
In this section, we will study the solution to the constrained optimization \eqref{eq:constrainedrn}. Specifically, we will show in \Cref{thm:uniqueness} that the solution is unique under reasonable conditions.
\subsection{Spherical geometry }\label{subsect:sphericalgeo}
We introduce some definitions and relevant results for the spherical geometry in this subsection. We refer to \cite{ferreira2013projections,ferreira2014concepts} for technical details. We largely focus on the unit sphere in this section, i.e., $C^{H} = \mathbb{S}^{N-1}$. In section \Cref{sect:algorithms}, we will use the more general origin-centered sphere of nonzero radius. 

The \textit{intrinsic distance} on $\mathbb{S}^{N-1}$ is defined to be the great circle distance between two points, which corresponds to the angle between the two unit vectors in the ambient space.
\begin{definition}[Intrinsic distance on the sphere]\label{def:unit-spdist}
    Given $\boldsymbol{u},\boldsymbol{w}\in\mathbb{S}^{N-1}$, the intrinsic distance between them is 
    \begin{align}\label{eq:unitspheredist}
        d(\boldsymbol{u},\boldsymbol{w}) = \arccos\langle\boldsymbol{u},\boldsymbol{w}\rangle.
    \end{align}
    If $S$ is an origin-centered sphere with radius $r > 0$, then the intrinsic distance between $\boldsymbol{u},\boldsymbol{w}\in S$ is 
    \begin{align}\label{eq:spheredist}
        d_r(\boldsymbol{u},\boldsymbol{w}) = r \arccos\left\langle \boldsymbol{u}/\Vert\boldsymbol{u}\Vert,\boldsymbol{w}/\Vert\boldsymbol{w}\Vert\right\rangle.
    \end{align}
\end{definition}
Note that the intrinsic distance between two points on a sphere of radius $r \neq 1$ is given by the intrinsic distance between the unit-normalized points, scaled by the radius.

\begin{definition}[Geodesics on a sphere]
A geodesic on the unit sphere $\mathbb{S}^{N-1}$ is a \textit{great circle}, i.e, the intersection curve of the sphere and a hyperplane in $\mathbb{R}^N$ through the origin. The unique arclength-parameterized geodesic segment from $\bs{u}$ to $\bs{w}$, where $\bs{u},\bs{w}\in\mathbb{S}^{N-1}$ and $\bs{u}\ne\pm\bs{w}$, is given by
\begin{align}\label{eq:geosegmentunit}
    &\gamma_{\bs{u}\bs{w}}(t) = \csc d(\bs{u},\bs{w}) \left[ \bs{u} \sin(d(\bs{u},\bs{w}) - t) + \bs{w} \sin t \right], && t\in [0, d(\bs{u},\bs{w})].
\end{align}
  The (non-unique) geodesic segments joining $\bs{u}$ and $-\bs{u}$, starting at $\bs{u}$ with velocity $\bs{v}$ satisfying $\Vert\bs{v}\Vert = 1$ at $\bs{u}$, is given by
\begin{align}\label{eq:unitgeosegments}
    &\gamma_{\bs{u}\{-\bs{u}\}}\coloneqq \cos(t)\bs{u}+\sin(t)\bs{v},& t\in[0,\pi].
\end{align}
For a general sphere $S$ centered at the origin with radius $r$, the geodesic segments can be defined via rescaling \eqref{eq:geosegmentunit} or \eqref{eq:unitgeosegments}.
\end{definition}
\begin{definition}[Exponential Mapping]
The \textit{exponential mapping} at $\bs{u}$ is defined to be 
\begin{align}\label{eq:expmapunit}
    &exp_{\bs{u}}: T_{\bs{u}}\mathbb{S}^{N-1}\to\mathbb{S}^{N-1}, &&\bs{v}\to \bs{u}\cos(\Vert\bs{v}\Vert)+\frac{\bs{v}}{\Vert\bs{v}\Vert}\sin(\Vert\bs{v}\Vert),
\end{align}
  which maps an element on the tangent plane $T_{\bs{u}} \mathbb{S}^{N-1}$ at $\bs{u}$ to the endpoint of the geodesic segment of length $\Vert\bs{v}\Vert$ starting at $\bs{u}$ in the direction of $\bs{v}$. In \eqref{eq:geosegmentunit}, the geodesic segment can be expressed as $\gamma_{\bs{u}\bs{w}}(t)=exp_{\bs{u}}(t\gamma'_{\bs{u}\bs{w}}(0))$. For a general sphere $S$ centered at the origin with radius $r$, the geodesic as well as the exponential mapping can be defined via rescaling \eqref{eq:expmapunit}.
\end{definition}
\begin{definition}[Spherically convex set]\label{def:sconvex}
A subset $C\subseteq\mathbb{S}^{N-1}$ is said to be \textit{spherically convex} if for any $\boldsymbol{s},\boldsymbol{t}\in C$, all the geodesic segments joining $\boldsymbol{s}$ and $\boldsymbol{t}$ are contained in $C$.
\end{definition}
\begin{proposition}[\cite{ferreira2013projections}, Proposition 2]\label{prop: sconvexset}
    Let $C\subseteq\mathbb{S}^{N-1}$. $C$ is a spherically convex set if and only if the cone 
    \begin{align}\label{eq:KC-def}
        K_C = \{z\boldsymbol{s}\;|\;\boldsymbol{s}\in C, z\in[0,+\infty)\}
    \end{align}
    is convex (in Euclidean sense) and pointed, i.e., $K_C\bigcap(-K_C)=\{\boldsymbol{0}\}$.
\end{proposition}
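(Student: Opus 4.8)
The plan is to build a dictionary between the spherical geometry of $C$ and the Euclidean geometry of $\KC$, and then read off both implications from it. The two facts I would establish first are: (i) a point $\boldsymbol{w}\in\mathbb{S}^{N-1}$ lies in $C$ if and only if $\boldsymbol{w}\in\KC$, i.e. $C=\KC\cap\mathbb{S}^{N-1}$; and (ii) for any non-antipodal $\boldsymbol{s},\boldsymbol{t}\in\mathbb{S}^{N-1}$, the minimal geodesic segment between them is exactly the radial normalization of the planar cone $\{\alpha\boldsymbol{s}+\beta\boldsymbol{t}\mid\alpha,\beta\ge0\}$. Fact (i) is immediate from \eqref{eq:KC-def}: every element of $\KC$ is $z\boldsymbol{s}$ with $\boldsymbol{s}\in C\subseteq\mathbb{S}^{N-1}$ and $z\ge0$, so a unit-norm element forces $z=1$. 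Fact (ii) follows directly from the explicit formula \eqref{eq:geosegmentunit}, since the coefficients $\csc d(\boldsymbol{s},\boldsymbol{t})\,\sin(d(\boldsymbol{s},\boldsymbol{t})-t)$ and $\csc d(\boldsymbol{s},\boldsymbol{t})\,\sin t$ multiplying $\boldsymbol{s}$ and $\boldsymbol{t}$ are both nonnegative for $t\in[0,d(\boldsymbol{s},\boldsymbol{t})]$, and conversely every unit-norm nonnegative combination of $\boldsymbol{s},\boldsymbol{t}$ is attained by some such $t$.

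For the direction ``$\KC$ convex and pointed $\Rightarrow C$ spherically convex'', I would first use pointedness to dispose of antipodal pairs: if $\boldsymbol{s},-\boldsymbol{s}\in C$ then $\boldsymbol{s}\in\KC$ and $\boldsymbol{s}\in-\KC$, forcing $\boldsymbol{s}\in\KC\cap(-\KC)=\{\boldsymbol{0}\}$, which is impossible for a unit vector. Hence any two points of $C$ are non-antipodal and the only geodesic segment to check is the minimal one. By (ii) each of its points is the normalization of some $\alpha\boldsymbol{s}+\beta\boldsymbol{t}$ with $\alpha,\beta\ge0$; convexity of the cone $\KC$ gives $\alpha\boldsymbol{s}+\beta\boldsymbol{t}\in\KC$, and its normalization is then a unit-norm element of $\KC$, hence in $C$ by (i). Thus the whole segment lies in $C$.

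For the converse, I would note that $\KC$ is a cone by construction, so its convexity is equivalent to closure under addition. Given $\boldsymbol{x}=z_1\boldsymbol{s}$ and $\boldsymbol{y}=z_2\boldsymbol{t}$ in $\KC$ with $z_1,z_2>0$ and $\boldsymbol{s},\boldsymbol{t}$ non-antipodal, (ii) shows that the normalization of $\boldsymbol{x}+\boldsymbol{y}=z_1\boldsymbol{s}+z_2\boldsymbol{t}$ lies on the minimal geodesic from $\boldsymbol{s}$ to $\boldsymbol{t}$, hence in $C$ by spherical convexity, so $\boldsymbol{x}+\boldsymbol{y}\in\KC$. Pointedness would follow contrapositively: a nonzero $\boldsymbol{x}$ with $\boldsymbol{x},-\boldsymbol{x}\in\KC$ yields, after normalization, an antipodal pair $\boldsymbol{s},-\boldsymbol{s}\in C$.

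The step I expect to be the main obstacle is precisely the antipodal case, which the clean correspondence (ii) does not cover. Under \Cref{def:sconvex} a set containing an antipodal pair $\boldsymbol{s},-\boldsymbol{s}$ must contain \emph{every} half-great-circle joining them (the non-unique geodesics \eqref{eq:unitgeosegments}), and sweeping the initial velocity over all unit tangent directions at $\boldsymbol{s}$ shows that such a $C$ is forced to be the entire sphere $\mathbb{S}^{N-1}$, whose cone $\mathbb{R}^N$ is convex but not pointed. I would therefore handle the forward direction by showing that a spherically convex $C$ either contains no antipodal pair --- in which case the argument above delivers pointedness --- or equals $\mathbb{S}^{N-1}$, the single degenerate configuration that must be excluded (and which is irrelevant for the proper feasible sets considered in this paper). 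Making this dichotomy precise, and confirming that it is the only gap between the two notions, is where I would concentrate the care.
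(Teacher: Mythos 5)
The paper offers no proof of \Cref{prop: sconvexset} --- it is imported verbatim from \cite{ferreira2013projections} --- so there is no in-house argument to compare against; judged on its own, your proof is correct and is the standard one. The two preliminary facts are exactly the right dictionary: $C=K_C\cap\mathbb{S}^{N-1}$, and the minimal geodesic arc between non-antipodal unit vectors equals the radial normalization of their nonnegative hull (both coefficients in \eqref{eq:geosegmentunit} are indeed nonnegative on $[0,d(\bs{u},\bs{w})]$, and the normalized cone section is precisely the minor arc). With these in hand, each implication reduces to elementary cone manipulations, and every step you give checks out: pointedness excludes antipodal pairs in $C$; convexity of $K_C$ pushes $\alpha\bs{s}+\beta\bs{t}$ into $K_C$ and its normalization into $C$; conversely a cone is convex iff closed under addition, and $z_1\bs{s}+z_2\bs{t}$ normalizes onto the arc from $\bs{s}$ to $\bs{t}$ (your implicit nondegeneracy claim that $z_1\bs{s}+z_2\bs{t}\neq\bs{0}$ for non-antipodal $\bs{s},\bs{t}$ and $z_1,z_2>0$ is easily verified by taking norms). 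Your closing observation about the antipodal case is also substantively correct and is the one place where the quoted statement is imprecise: under \Cref{def:sconvex}, a spherically convex set containing an antipodal pair must contain every half-great-circle joining that pair and hence equals $\mathbb{S}^{N-1}$, whose cone $\mathbb{R}^N$ is convex but not pointed, so the ``only if'' direction fails for this single degenerate configuration (and, vacuously, for $C=\varnothing$). The proposition is thus implicitly about proper nonempty subsets of the sphere; this is harmless in the present paper, where the feasible set $C=C^H\cap C^0$ is an intersection of closed hemispheres and is never all of $\mathbb{S}^{N-1}$, but your instinct to isolate that dichotomy is exactly the care the cited source glosses over, and it is consistent with how \Cref{prop:chemisphere} is used later.
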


\begin{proposition}\label{prop:chemisphere}
A closed hemisphere is \emph{not} spherically convex.
\end{proposition}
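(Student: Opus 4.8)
The plan is to exhibit an explicit counterexample to \Cref{def:sconvex}, exploiting the fact that spherical convexity demands that \emph{all} geodesic segments between two points lie in the set, and that between antipodal points these segments are non-unique. Without loss of generality write the closed hemisphere as $H = \{\bs{s}\in\mathbb{S}^{N-1} : \langle \bs{s},\bs{a}\rangle \ge 0\}$ for some fixed pole $\bs{a}\in\mathbb{S}^{N-1}$, so that the bounding equator is $\{\bs{s} : \langle \bs{s},\bs{a}\rangle = 0\}$. The whole difficulty is concentrated on this equator: two interior points of $H$ are joined by a unique minimizing geodesic that stays inside, so the failure must come from a pair of boundary points that are antipodal, where the geodesic joining them is not unique and one of the choices dips into the opposite hemisphere.

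Concretely, I would fix any unit vector $\bs{u}$ on the equator, i.e.\ with $\langle \bs{u},\bs{a}\rangle = 0$ (such $\bs{u}$ exists whenever $N\ge 2$, which is the only nontrivial case). Then both $\bs{u}$ and $-\bs{u}$ satisfy $\langle \pm\bs{u},\bs{a}\rangle = 0 \ge 0$, so both lie in $H$. Since $\langle \bs{a},\bs{u}\rangle = 0$ and $\|\bs{a}\|=1$, the vector $\bs{v} \coloneqq -\bs{a}$ is a legitimate unit tangent direction at $\bs{u}$, and \eqref{eq:unitgeosegments} produces the geodesic segment $\gamma(t) = \cos(t)\,\bs{u} + \sin(t)\,\bs{v}$, $t\in[0,\pi]$, joining $\bs{u}$ to $-\bs{u}$. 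Evaluating the defining functional along this segment gives
\begin{align*}
  \langle \gamma(t),\bs{a}\rangle = \cos(t)\,\langle \bs{u},\bs{a}\rangle + \sin(t)\,\langle \bs{v},\bs{a}\rangle = -\sin(t),
\end{align*}
which is strictly negative for every $t\in(0,\pi)$. Hence this particular geodesic segment joining two points of $H$ leaves $H$ entirely on its interior, so $H$ fails the condition in \Cref{def:sconvex} and is not spherically convex.

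As a cross-check and an alternative route, I would invoke the cone characterization in \Cref{prop: sconvexset}: the cone generated by $H$ is exactly the closed half-space $K_H = \{\bs{x}\in\mathbb{R}^N : \langle \bs{x},\bs{a}\rangle \ge 0\}$, and one computes $K_H\cap(-K_H) = \{\bs{x} : \langle \bs{x},\bs{a}\rangle = 0\}$, the entire hyperplane orthogonal to $\bs{a}$, which is strictly larger than $\{\bs{0}\}$ when $N\ge 2$; thus $K_H$ is convex but \emph{not} pointed, and the proposition again denies spherical convexity. The main subtlety to get right, rather than any hard computation, is conceptual: one must use the ``all geodesic segments'' clause together with an antipodal boundary pair, since the open hemisphere \emph{is} spherically convex (its generated cone is pointed). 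This confirms that the boundary equator is precisely where closedness breaks spherical convexity, and it is worth flagging that the statement is vacuous or trivial for $N=1$, so $N\ge 2$ is implicitly assumed.
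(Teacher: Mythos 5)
Your proposal is correct, and your secondary ``cross-check'' is in fact the paper's entire proof: the paper simply observes that a closed hemisphere contains antipodal points, so the generated cone $K_C$ satisfies $K_C\cap(-K_C)\supsetneq\{\bs{0}\}$ and is not pointed, whence \Cref{prop: sconvexset} denies spherical convexity. Your primary argument is a genuinely different and more elementary route: rather than passing through the cone characterization, you work directly from \Cref{def:sconvex} by picking an antipodal pair $\bs{u},-\bs{u}$ on the bounding equator and exhibiting the explicit geodesic segment $\gamma(t)=\cos(t)\,\bs{u}-\sin(t)\,\bs{a}$ from \eqref{eq:unitgeosegments}, whose interior satisfies $\langle\gamma(t),\bs{a}\rangle=-\sin(t)<0$ and hence leaves the hemisphere entirely. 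What the paper's argument buys is brevity and reuse of an already-quoted result; what yours buys is self-containment (no reliance on Proposition 2 of the cited reference) and a concrete geometric picture of \emph{why} the failure occurs --- it isolates the non-uniqueness of geodesics between antipodal points and shows that closedness (inclusion of the equator) is precisely what breaks convexity, consistent with the open hemisphere being spherically convex. Both arguments are sound; your remark that the claim implicitly assumes $N\ge 2$ is a fair, if minor, point of added care.
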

\begin{proof}
  Noticing the existence of the antipodal points on a closed hemisphere, then there is a nontrivial $\bs{v}$ such that $\bs{v}, -\bs{v} \in K_C$. Therefore $K_C \cap (-K_C)$ contains at least one nontrivial point, and \Cref{prop: sconvexset} yields the conclusion.
\end{proof}
One major utility of convex sets on the sphere is the ability to perform projections. 
\begin{definition}[Spherical projection onto a closed convex set]
Let $C\subset \mathbb{S}^{N-1}$ be a spherically convex, closed set. The projection of $\boldsymbol{z}\in \mathbb{S}^{N-1}$ onto $C$ is defined to be:
\begin{align}
  \mathcal{P}^s_C(\boldsymbol{z}) = \left\{\boldsymbol{t} \in \mathbb{S}^{N-1} \;|\;d(\boldsymbol{t},\boldsymbol{z})\le d(\boldsymbol{t},\boldsymbol{r}), \forall \boldsymbol{r}\in C\right\},
\end{align}
i.e., the nearest intrinsic distance projection. 
\end{definition}
The definition above does not immediately reveal uniqueness or computability for this type of projection, but
the following proposition proved in \cite{ferreira2013projections} shows the relation between spherical projection onto a closed spherically convex set and the Euclidean projection onto the convex cone spanned by the spherical convex set.
\begin{proposition}[\cite{ferreira2013projections}, Proposition 8]\label{prop: sphericalconvexproj}
  Let $C\subseteq\mathbb{S}^{N-1}$ be a spherical convex set. Take $\boldsymbol{z}\in \mathbb{S}^{N-1}$. Let $\boldsymbol{u} = \mathcal{P}_{K_C}(\boldsymbol{z})$, be the Euclidean projection of $\boldsymbol{z}$ onto $K_C$, the latter of which is defined in \eqref{eq:KC-def}. If $\boldsymbol{u}\ne\boldsymbol{0}$, then the spherical projection of $\boldsymbol{z}$ onto $C$ is {\rm unique}, and is given by,
    \begin{align*}
        \mathcal{P}^s_C(\boldsymbol{z}) = \frac{\boldsymbol{u}}{\Vert\boldsymbol{u}\Vert} = exp_{\bs{z}}\bs{v},
    \end{align*}
    where 
    \begin{align*}
        &\bs{v} = \left(-\bs{z}\cot\theta+\frac{\bs{u}}{\Vert\bs{u}\Vert}\csc\theta\right)\theta,&\theta = d(\bs{z},\bs{u}/\Vert\bs{u}\Vert).
    \end{align*}
\end{proposition}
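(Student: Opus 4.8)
The plan is to recognize that the intrinsic-distance projection is really an inner-product maximization, and then to match that maximization against the Euclidean cone projection $\mathcal{P}_{K_C}$. Since the intrinsic distance is $d(\boldsymbol{t},\boldsymbol{z}) = \arccos\langle\boldsymbol{t},\boldsymbol{z}\rangle$ and $\arccos$ is strictly decreasing on $[-1,1]$, minimizing $d(\boldsymbol{t},\boldsymbol{z})$ over $\boldsymbol{t}\in C$ is equivalent to maximizing $\langle\boldsymbol{t},\boldsymbol{z}\rangle$ over $\boldsymbol{t}\in C$; as $C$ is closed and bounded, hence compact, this maximum is attained. Thus $\mathcal{P}^s_C(\boldsymbol{z})$ is characterized as a maximizer of the linear functional $\boldsymbol{t}\mapsto\langle\boldsymbol{t},\boldsymbol{z}\rangle$ on $C$. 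I would first record the elementary fact that, because every nonzero element of $K_C$ has the form $\rho\boldsymbol{s}$ with $\rho>0$ and $\boldsymbol{s}=\boldsymbol{w}/\|\boldsymbol{w}\|\in C$, one has $K_C\cap\mathbb{S}^{N-1}=C$; in particular $\boldsymbol{u}/\|\boldsymbol{u}\|\in C$ whenever $\boldsymbol{u}\neq\boldsymbol{0}$.

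Next I would connect the two optimizations through the radial decomposition of points in the cone. Writing a generic $\boldsymbol{w}=\rho\boldsymbol{s}\in K_C$ with $\rho\ge0$ and $\boldsymbol{s}\in C$, and using $\|\boldsymbol{s}\|=\|\boldsymbol{z}\|=1$, expand
\begin{align*}
\|\boldsymbol{w}-\boldsymbol{z}\|^2 = \rho^2 - 2\rho\langle\boldsymbol{s},\boldsymbol{z}\rangle + 1.
\end{align*}
For fixed $\boldsymbol{s}$ the minimizing radius over $\rho\ge0$ is $\rho^\ast=\max\{0,\langle\boldsymbol{s},\boldsymbol{z}\rangle\}$, giving minimal value $1-(\max\{0,\langle\boldsymbol{s},\boldsymbol{z}\rangle\})^2$. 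Since $t\mapsto 1-t^2$ is decreasing for $t\ge 0$, minimizing $\|\boldsymbol{w}-\boldsymbol{z}\|^2$ over $K_C$ amounts to maximizing $\max\{0,\langle\boldsymbol{s},\boldsymbol{z}\rangle\}$, i.e. maximizing $\langle\boldsymbol{s},\boldsymbol{z}\rangle$ over $\boldsymbol{s}\in C$. The hypothesis $\boldsymbol{u}=\mathcal{P}_{K_C}(\boldsymbol{z})\ne\boldsymbol{0}$ is exactly the statement that this maximal inner product is strictly positive, so the optimal radius is positive and the optimal direction $\boldsymbol{u}/\|\boldsymbol{u}\|$ coincides with the maximizer of $\langle\boldsymbol{s},\boldsymbol{z}\rangle$ on $C$. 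Combining with the first paragraph yields $\mathcal{P}^s_C(\boldsymbol{z}) = \boldsymbol{u}/\|\boldsymbol{u}\|$.

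Uniqueness I would obtain from the Euclidean side. By \Cref{prop: sconvexset} the cone $K_C$ is convex and pointed, and since $C$ is closed (hence compact as a closed subset of $\mathbb{S}^{N-1}$) and omits the origin, $K_C$ is a closed convex set. The Euclidean projection onto a closed convex set is unique, so $\boldsymbol{u}$, and therefore its normalization $\boldsymbol{u}/\|\boldsymbol{u}\|$, is uniquely determined; this transfers to uniqueness of $\mathcal{P}^s_C(\boldsymbol{z})$.

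Finally I would verify the exponential-map formula by a direct and routine computation. Set $\boldsymbol{w}=\boldsymbol{u}/\|\boldsymbol{u}\|$ and $\theta=d(\boldsymbol{z},\boldsymbol{w})=\arccos\langle\boldsymbol{z},\boldsymbol{w}\rangle$, so that $\langle\boldsymbol{z},\boldsymbol{w}\rangle=\cos\theta$. A short calculation then shows $\langle\boldsymbol{v},\boldsymbol{z}\rangle=0$ (so $\boldsymbol{v}\in T_{\boldsymbol{z}}\mathbb{S}^{N-1}$) and $\|\boldsymbol{v}\|=\theta$; substituting into the exponential map \eqref{eq:expmapunit} and using $\sin\theta\cot\theta=\cos\theta$ and $\sin\theta\csc\theta=1$ collapses $\exp_{\boldsymbol{z}}(\boldsymbol{v})$ to $\boldsymbol{w}$. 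The conceptual crux, and the only step requiring genuine care, is the second paragraph: correctly carrying out the radial minimization and recognizing that the nondegeneracy assumption $\boldsymbol{u}\ne\boldsymbol{0}$ is precisely what excludes the case $\langle\boldsymbol{s},\boldsymbol{z}\rangle\le0$ for all $\boldsymbol{s}\in C$, in which the spherical projection would not be well-defined.
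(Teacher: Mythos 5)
The paper offers no proof of this proposition; it is imported verbatim from \cite{ferreira2013projections} (Proposition 8), so the only thing to compare against is that reference, and your blind proof is a correct, self-contained reconstruction of essentially the standard argument. The structure is sound: you reduce both projections to maximizing $\langle\cdot,\boldsymbol{z}\rangle$ over $C$ --- the spherical one via strict monotonicity of $\arccos$, the Euclidean one via the radial decomposition $\|\rho\boldsymbol{s}-\boldsymbol{z}\|^2=\rho^2-2\rho\langle\boldsymbol{s},\boldsymbol{z}\rangle+1$ with optimal radius $\max\{0,\langle\boldsymbol{s},\boldsymbol{z}\rangle\}$ --- and you correctly identify the hypothesis $\boldsymbol{u}\ne\boldsymbol{0}$ with the existence of some $\boldsymbol{s}\in C$ satisfying $\langle\boldsymbol{s},\boldsymbol{z}\rangle>0$. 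The uniqueness transfer is valid: two distinct spherical maximizers $\boldsymbol{s}_1\ne\boldsymbol{s}_2$ with common positive value $t$ would yield two distinct Euclidean minimizers $t\boldsymbol{s}_1, t\boldsymbol{s}_2$ in the closed convex cone $K_C$, contradicting uniqueness of the Euclidean projection (note that only convexity and closedness of $K_C$ are used here; pointedness plays no role, which is fine). The verification that $\langle\boldsymbol{v},\boldsymbol{z}\rangle=0$, $\|\boldsymbol{v}\|=\theta$, and $\exp_{\boldsymbol{z}}\boldsymbol{v}=\boldsymbol{u}/\|\boldsymbol{u}\|$ is indeed routine. Two small points are worth making explicit: the statement omits ``closed,'' but closedness of $C$ (equivalently of $K_C$, which you correctly deduce from compactness of $C$ and $0\notin C$) is needed both for $\mathcal{P}_{K_C}$ to exist and for your attainment step, so assuming it is appropriate; and the formula for $\boldsymbol{v}$ requires $\theta\ne 0$ for $\cot\theta$ and $\csc\theta$ to be defined --- your own argument shows $\langle\boldsymbol{z},\boldsymbol{u}/\|\boldsymbol{u}\|\rangle>0$, so $\theta\in[0,\pi/2)$ and only the trivial case $\boldsymbol{z}\in C$, $\theta=0$, needs to be set aside.
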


\subsection{Uniqueness of the solution to the \eqref{eq:opt-equiv2}}
In this subsection, we present the uniqueness theorem for the solution to \eqref{eq:opt-equiv2} (thus to \eqref{eq:constrainedrn}).
To start, we provide an equivalent formulation to \eqref{eq:constrainedrn}.
\begin{lemma}\label{lem: gequiv}
    The constrained optimization problem \eqref{eq:constrainedrn} is equivalent to finding the {\rm spherical projection} of $\bhp$ onto the feasible set $C$,
    \begin{align}\label{eq:opt-equiv2}
        \bd  = \argmin_{\bhv\in C}d(\bhv, \bhp).
    \end{align}
\end{lemma}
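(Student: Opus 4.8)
The plan is to exploit the fact that every feasible point, as well as the target $\bhp$, lies on the same origin-centered sphere $C^H$ of radius $r \coloneqq \Vert\bhp\Vert$, so that both the Euclidean objective in \eqref{eq:constrainedrn} and the intrinsic-distance objective in \eqref{eq:opt-equiv2} collapse to strictly monotone functions of the single scalar $\langle \bhv, \bhp\rangle$. Once I show that the two objectives are strictly monotone transformations of this common quantity, it follows immediately that they have identical minimizers over $C$, which is precisely the claimed equivalence. I would first dispose of the degenerate case $\bhp = \bs{0}$ (where $r = 0$ and both problems are trivially solved by $\bhv = \bs{0}$) and henceforth assume $r > 0$.

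For the Euclidean objective, I would expand the squared distance and use that $\bhv \in C \subseteq C^H$ forces $\Vert\bhv\Vert = \Vert\bhp\Vert = r$:
\begin{align*}
  \Vert \bhv - \bhp\Vert^2 = \Vert\bhv\Vert^2 - 2\langle\bhv,\bhp\rangle + \Vert\bhp\Vert^2 = 2r^2 - 2\langle\bhv,\bhp\rangle.
\end{align*}
Thus over the feasible set $C$ the objective $\tfrac12\Vert\bhv-\bhp\Vert^2$ is an affine, strictly decreasing function of $\langle\bhv,\bhp\rangle$, so minimizing it is the same as maximizing $\langle\bhv,\bhp\rangle$ over $C$.

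Next I would treat the intrinsic distance. Since both $\bhv$ and $\bhp$ lie on the radius-$r$ sphere, the radius-$r$ form \eqref{eq:spheredist} of \Cref{def:unit-spdist} gives
\begin{align*}
  d(\bhv,\bhp) = r\arccos\!\left(\frac{\langle\bhv,\bhp\rangle}{r^2}\right),
\end{align*}
where the argument of $\arccos$ lies in $[-1,1]$ by Cauchy--Schwarz together with the equal-norm condition. Because $\arccos$ is strictly decreasing on $[-1,1]$ and $r>0$ is fixed, $d(\bhv,\bhp)$ is again a strictly decreasing function of $\langle\bhv,\bhp\rangle$. Hence minimizing $d(\bhv,\bhp)$ over $C$ is also equivalent to maximizing $\langle\bhv,\bhp\rangle$ over $C$. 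Since both \eqref{eq:constrainedrn} and \eqref{eq:opt-equiv2} reduce to the single problem $\max_{\bhv\in C}\langle\bhv,\bhp\rangle$, they have the same set of minimizers, which establishes the equivalence.

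I do not anticipate a genuinely hard step here; the one point that must be emphasized is that the reduction relies crucially on the norm constraint $C \subseteq C^H$, without which the Euclidean and intrinsic objectives would not be monotone images of the same scalar. I would also make explicit that the equivalence is at the level of minimizers (\emph{argmin}), not of objective values, and that no convexity or uniqueness of the minimizer is required for this lemma — those properties are deferred to \Cref{thm:uniqueness}.
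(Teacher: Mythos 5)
Your proof is correct and follows essentially the same route as the paper's: both arguments expand the squared Euclidean distance using the equal-norm condition $\Vert\bhv\Vert=\Vert\bhp\Vert$ and observe that each objective is a strictly decreasing function of $\langle\bhv,\bhp\rangle$, so both problems reduce to $\argmax_{\bhv\in C}\langle\bhv,\bhp\rangle$. Your version merely spells out the monotonicity of $\arccos$ and the degenerate case $\bhp=\bs{0}$, which the paper leaves implicit.
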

\begin{proof}
    The proof is direct,
    \begin{equation}
    \begin{aligned}
        \argmin_{\bhv\in C}\frac{1}{2}\Vert\bhv-\bhp\Vert^2
        &= \argmin_{\bhv\in C} \left(\frac{1}{2}(\Vert \bhv\Vert^2+\Vert\bhp\Vert^2) - \langle \bhv,\bhp\rangle\right)\\
        & = \argmax_{\bhv\in C}\langle \bhv,\bhp\rangle,\\
        & = \argmin_{\bhv\in C}d(\bhv,\bhp).
    \end{aligned}
    \end{equation}
\end{proof}
Using \Cref{lem: gequiv}, we are able to show the following uniqueness theorem.
\begin{theorem}\label{thm:uniqueness}
    Assume the following hold for the constraint set $C = C^H\bigcap C^0$ defined by \eqref{eq:lkconstraintsrn}-\eqref{eq:lconstraintsrn} and \eqref{eq:hnorm-constraintrn}:
    \begin{enumerate}[(a)]
        \item The set of constraints $(L_k,\omega_k)_{k\in[K]}$ are $V$-determining.
        \item The Euclidean projection onto the linearly-constrained set satisfies $\mathcal{P}_{C^0}\widehat{\bs{p}} \ne \boldsymbol{0}$.
      \item The set $C^H$ in \eqref{eq:hnorm-constraintrn} is $\mathbb{S}^{N-1}$, i.e., $\|\bhp\| = 1$.
    \end{enumerate}
    Then the solution to \eqref{eq:opt-equiv2} (or equivalently, \eqref{eq:constrainedrn}) is unique.
\end{theorem}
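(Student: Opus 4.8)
The plan is to recognize \eqref{eq:opt-equiv2} as precisely the spherical projection $\mathcal{P}^s_C(\bhp)$ and to invoke the uniqueness guarantee of \Cref{prop: sphericalconvexproj}. That proposition requires two inputs: first, that $C$ is a closed, spherically convex subset of $\mathbb{S}^{N-1}$; and second, that the Euclidean projection of $\bhp$ onto the generated cone $\KC$ is nonzero. By \Cref{lem: gequiv} the minimizer of \eqref{eq:constrainedrn} coincides with $\mathcal{P}^s_C(\bhp)$, so establishing these two inputs finishes the proof.

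First I would identify the cone generated by $C$. Since $C = C^0 \cap \mathbb{S}^{N-1}$ and $C^0$ is a convex cone, every nonzero $\bhv \in C^0$ rescales to $\bhv/\Vert\bhv\Vert \in C$, whence $\KC = C^0$; assumption (b) guarantees $C^0 \neq \{\boldsymbol{0}\}$ so that $C$ is nonempty, and closedness of $C$ follows from closedness of $C^0$ and of the sphere. By \Cref{prop: sconvexset}, spherical convexity of $C$ is then equivalent to $\KC = C^0$ being convex and pointed. Convexity of $C^0$ is already established in the text as an intersection of convex cones, so the only nontrivial requirement is pointedness, i.e., $C^0 \cap (-C^0) = \{\boldsymbol{0}\}$.

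The crux of the argument --- and the step I expect to be the main obstacle --- is deriving pointedness from the $V$-determining hypothesis (a). Suppose $\bhv \in C^0 \cap (-C^0)$, and let $v \in V$ be the element with coordinate vector $\bhv$. Membership of $\bhv$ in $C^0$ gives $\langle \bhv, \bhl_k(y)\rangle \le 0$ for all $y \in \omega_k$ and all $k \in [K]$, while membership of $-\bhv$ in $C^0$ gives the reversed inequality $\langle \bhv, \bhl_k(y)\rangle \ge 0$. Together these force $L_k(v,y) = \langle \bhv, \bhl_k(y)\rangle = 0$ for every $y$ and $k$. The $V$-determining condition of \Cref{def:determining} then yields $v = 0$, i.e., $\bhv = \boldsymbol{0}$, proving pointedness. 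This is where the interplay between the geometry of the feasible cone and the constraint structure is essential: without $V$-determining there would exist a nonzero $v$ with $L_k(v,y)=0$ throughout, so $C^0$ would contain the entire line through $\bhv$ and $C$ would fail to be spherically convex, containing antipodal points (cf. \Cref{prop:chemisphere}).

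With $C$ now a closed spherically convex set, it remains to verify the second hypothesis of \Cref{prop: sphericalconvexproj}. Assumption (c) places $\bhp$ on $\mathbb{S}^{N-1}$, making it a legitimate point from which to project, and assumption (b) states exactly that $\mathcal{P}_{\KC}(\bhp) = \mathcal{P}_{C^0}(\bhp) \neq \boldsymbol{0}$. \Cref{prop: sphericalconvexproj} then delivers a unique spherical projection $\mathcal{P}^s_C(\bhp)$, which by \Cref{lem: gequiv} is the unique solution to \eqref{eq:opt-equiv2}, and hence to \eqref{eq:constrainedrn}.
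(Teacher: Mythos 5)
Your proposal is correct and follows essentially the same route as the paper's proof: identify $K_C = C^0$, use \Cref{prop: sconvexset} to reduce spherical convexity to pointedness of $C^0$, derive pointedness from the $V$-determining hypothesis exactly as you do, and then invoke \Cref{prop: sphericalconvexproj} together with \Cref{lem: gequiv} and assumption (b). If anything, your write-up is slightly more explicit than the paper's about where assumption (b) enters as the nonzero-projection hypothesis of \Cref{prop: sphericalconvexproj}.
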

\begin{proof}
  We first show that $C$ is a closed spherically convex set. A subsequent application of \Cref{prop: sphericalconvexproj} will prove the result. 
  
  Since $c_k(y)$ are closed half-spaces, $C^H\cap c_k(y)$ are closed hemispheres and
  $$
  C = C^H\bigcap C^0 = \bigcap_{y\in\omega_k, k\in [K]}(C^H\cap c_k(y))
  $$ 
  is closed. On the other hand, from \Cref{prop: sconvexset}, the set $C$ is spherically convex if and only if the cone $\KC$ (see also \Cref{prop: sconvexset} for the definition) is convex and pointed. Direct calculation shows that $K_C = C^0$, which has been shown to be a closed convex set in \cite{zala2020structure}.
  Define
    \begin{align}
        W \coloneqq C^0\bigcap\{-C^0\} = \left\{\bs{v} \;\middle|\;\left\langle \bhl_k(y),\bs{v} \right\rangle = 0,\forall y\in \omega_k, k \in [K]\right\}.
    \end{align}
     Take $\bx\in W\subseteq C^0$. By assumption (a) and Definition \ref{def:determining}, the only element $v$ of $V$ satisfying $L_k(v,y) = 0$ for every $y \in \omega_k$ and $k \in [K]$ is $v= 0$. Thus, $W = \{\boldsymbol{0}\}$ and therefore $K_C=C^0$ is pointed.
     By \Cref{prop: sphericalconvexproj} set $C$ is a spherical convex set and the solution to \eqref{eq:opt-equiv2} (or equivalently, \eqref{eq:constrainedrn}) is unique.
\end{proof}
\begin{corollary}
  The conclusions in \Cref{thm:uniqueness} hold with loosening assumption (c) to $\|\bhp\| > 0$.
\end{corollary}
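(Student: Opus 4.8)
The plan is to reduce the case $\|\bhp\| > 0$ to the already-established unit-sphere case by a dilation argument, exploiting the homogeneity of the cone $C^0$. Write $r \coloneqq \|\bhp\| > 0$ and set $\bhp' \coloneqq \bhp/r$, so that $\|\bhp'\| = 1$. First I would observe that the radius-$r$ sphere rescales to the unit sphere, $C^H/r = \mathbb{S}^{N-1}$, while the cone is scale-invariant, $C^0/r = C^0$, since $C^0$ is a homogeneous convex cone (\eqref{eq:lkconstraintsrn}--\eqref{eq:lconstraintsrn} define cones with vertex at the origin, cf.\ \Cref{rmk:affine}). Consequently the feasible set rescales as $C/r = (C^H \cap C^0)/r = \mathbb{S}^{N-1} \cap C^0 \eqqcolon C'$, and the map $\bhv \mapsto \bhv/r$ is a bijection between $C$ and $C'$.

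Next I would transfer the objective. Since $\tfrac{1}{2}\|\bhv - \bhp\|^2 = r^2 \cdot \tfrac{1}{2}\|\bhv/r - \bhp'\|^2$, minimizing the original objective over $C$ is equivalent to minimizing $\tfrac{1}{2}\|\bhv' - \bhp'\|^2$ over $C'$, with minimizers related by $\bd = r\,\bd'$. Thus uniqueness of the solution $\bd'$ of the rescaled (unit-sphere) problem immediately yields uniqueness of $\bd$.

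It remains to check that the hypotheses of \Cref{thm:uniqueness} hold for the rescaled problem on $C' = \mathbb{S}^{N-1} \cap C^0$ with data $\bhp'$. Hypothesis (a) is a property of the constraint family $(L_k, \omega_k)_{k\in[K]}$ and of $C^0$ alone, hence untouched by the dilation; hypothesis (c) holds by construction since $\|\bhp'\| = 1$. The only point requiring care is hypothesis (b): I would invoke the positive homogeneity of the Euclidean projection onto a convex cone, $\mathcal{P}_{C^0}(z\bx) = z\,\mathcal{P}_{C^0}(\bx)$ for $z > 0$, to conclude $\mathcal{P}_{C^0}\bhp' = \tfrac{1}{r}\mathcal{P}_{C^0}\bhp \ne \boldsymbol{0}$, where the nonvanishing is exactly hypothesis (b) of \Cref{thm:uniqueness}, which is retained in the corollary. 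Applying \Cref{thm:uniqueness} to the rescaled problem then finishes the argument.

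There is no genuinely hard step here: the content is entirely the scale-invariance of $C^0$ and of the conic projection. The one place I would be most careful is verifying this positive homogeneity of $\mathcal{P}_{C^0}$ (equivalently, that both the sphere and the projection commute with the dilation $\bhv \mapsto \bhv/r$), since this is precisely what makes hypothesis (b) invariant and prevents the feasible geometry from degenerating as the radius changes.
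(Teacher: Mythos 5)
Your proposal is correct and is essentially the paper's own argument: the paper's proof is the single sentence that ``all arguments hold unchanged via scaling by $\|\bhp\| > 0$,'' and you have simply made that dilation explicit, including the only point that genuinely needs checking (positive homogeneity of $\mathcal{P}_{C^0}$ so that hypothesis (b) is scale-invariant). No differences of substance from the paper's route.
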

The proof is direct since all arguments hold unchanged via scaling by $\|\bhp\| > 0$.

\section{Algorithm: Spherical Projections}\label{sect:algorithms}
Having established the well-posedness of the problem \eqref{eq:opt-equiv2}, we proceed to discuss algorithms for solving the problem. In particular, we extend some procedures from \cite{zala2020structure} to the spherical optimization problem \eqref{eq:opt-equiv2}. We will shift our focus back to a general sphere centered at the origin with radius $r \neq 0$, equipped with the intrinsic distance $d_r(\cdot,\cdot)$ (\Cref{eq:spheredist}). Here, $r = \Vert\bhp\Vert \neq 0$ is the norm of the unconstrained solution.

\subsection{Spherical Projection Onto A Closed Hemisphere}
To start, we first compute the spherical projection of a point on the sphere onto a closed hemisphere $c_k(y)\bigcap C^{H}$, which later serves as an ingredient of our main algorithms. Note that \Cref{prop: sphericalconvexproj} is not directly applicable since a closed hemisphere is not spherically convex 
The proof in this section is elementary but we provide it in order to make our work self-contained. 

\begin{theorem}\label{thm:gprojectionhalfspace}
  Let $\bhp$ be given, and fix $(k,y)$. If $\bhl_k(y)$ is \emph{not} parallel to $\bhp$, then the spherical projection of $\bhp$ onto the closed hemisphere $C^H \cap c_k(y)$ is unique, i.e., the solution $\bc_k(\bhp;y)$ to 
    \begin{equation}\label{eq:opt3-3}
      \bc_k(\bhp;y) = \argmin_{\bhv \in C^H \cap c_k(y)} d_r\left( \bhv, \bhp \right),
    \end{equation}    
    is unique, and is given by 
    \begin{align}\label{eq:hplaneproj2}
      \bc_k(\bhp;y) = \left\{\begin{array}{rl} \bhp, & \bhp \in C^H \cap c_k(y), \\
        \frac{\mathcal{P}_L \bhp}{\|\mathcal{P}_L \bhp\|} \|\bhp\|, & \bhp \not\in C^H \cap c_k(y),
        \end{array}\right.
    \end{align}
    where $\mathcal{P}_L$ is the Euclidean projection operator onto the subspace $L$, with the latter defined as,
    \begin{align*}
      L \coloneqq \partial c_k(y) = \left\{\boldsymbol{s}\;\middle|\;\left\langle\bhl_k(y),\boldsymbol{s}\right\rangle=0\right\}.
    \end{align*}
\end{theorem}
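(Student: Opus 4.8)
The plan is to exploit \Cref{lem: gequiv}: since every feasible $\bhv$ and the target $\bhp$ lie on the sphere of radius $r = \|\bhp\|$, the intrinsic distance satisfies $d_r(\bhv,\bhp) = r\arccos\!\big(\langle\bhv,\bhp\rangle/r^2\big)$, and because $\arccos$ is strictly decreasing, minimizing $d_r(\bhv,\bhp)$ is equivalent to maximizing the linear functional $\langle\bhv,\bhp\rangle$ over $\bhv \in C^H \cap c_k(y)$. I would therefore recast \eqref{eq:opt3-3} as a linear maximization on the hemisphere and treat the two cases of \eqref{eq:hplaneproj2} separately. The first case, $\bhp \in C^H \cap c_k(y)$, is immediate: Cauchy--Schwarz gives $\langle\bhv,\bhp\rangle \le \|\bhv\|\,\|\bhp\| = r^2$ with equality if and only if $\bhv = \bhp$, so $\bhp$ is the unique maximizer and the projection is $\bhp$ itself.

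The substance lies in the second case, $\langle\bhp,\bhl_k(y)\rangle > 0$, where $\bhp$ violates the constraint. Here the first step is to show the inequality constraint is \emph{active} at the optimum, i.e. any maximizer $\bhv^\star$ satisfies $\langle\bhv^\star,\bhl_k(y)\rangle = 0$ and hence lies in the boundary subspace $L$; I expect this to be the main obstacle. The cleanest argument is by contradiction: if $\langle\bhv^\star,\bhl_k(y)\rangle < 0$ strictly, then $\bhv^\star \neq \pm\bhp$ (it differs from $\bhp$, which sits in the open opposite hemisphere, and from $-\bhp$, since the maximum value is positive while $-\bhp$ attains the minimum $-r^2$), so the arclength geodesic $\gamma_{\bhv^\star\bhp}$ of \eqref{eq:geosegmentunit}, rescaled to radius $r$, is well defined. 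Moving a short distance along it toward $\bhp$ strictly decreases $d_r(\cdot,\bhp)$, hence strictly increases $\langle\cdot,\bhp\rangle$, while remaining inside the hemisphere by continuity of $\langle\cdot,\bhl_k(y)\rangle$ together with the strictness of the inequality. This contradicts optimality and forces $\bhv^\star \in L \cap C^H$.

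With the optimum confined to $L \cap C^H$, the final step is elementary. Using the orthogonal decomposition $\bhp = \mathcal{P}_L\bhp + \mathcal{P}_{L^\perp}\bhp$, for every $\bhv \in L$ the $L^\perp$-component is annihilated, so $\langle\bhv,\bhp\rangle = \langle\bhv,\mathcal{P}_L\bhp\rangle$; maximizing this over $\{\bhv \in L : \|\bhv\| = r\}$ is again a Cauchy--Schwarz problem whose unique maximizer is $\bhv = r\,\mathcal{P}_L\bhp/\|\mathcal{P}_L\bhp\|$, exactly \eqref{eq:hplaneproj2}. This is where the hypothesis enters decisively: $\mathcal{P}_L\bhp \neq \boldsymbol{0}$ precisely because $\bhp$ is not parallel to $\bhl_k(y)$, since parallelism would place $\bhp \in L^\perp$, collapse the objective to zero on all of $L$, and render every point of the equator $L \cap C^H$ equidistant from $\bhp$, i.e. non-unique. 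The non-parallel assumption thus guarantees both existence and uniqueness of the maximizer, completing the identification of $\bc_k(\bhp;y)$.
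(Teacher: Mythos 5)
Your proof is correct, and it reaches the result by a route that differs in mechanism from the paper's, though both start identically: reduce the intrinsic-distance minimization to maximizing $\langle \bhv, \bhp\rangle$ (as in \Cref{lem: gequiv}) and dispose of the feasible case by Cauchy--Schwarz. Where you diverge is in the infeasible case. The paper writes a single chain of inequalities based on the orthogonal decomposition $\langle \bhc, \bhp\rangle = \langle \mathcal{P}_L\bhc, \mathcal{P}_L\bhp\rangle + \langle \bhc, \bhl\rangle\langle \bhp, \bhl\rangle$, bounds the objective above by $\|\bhp\|\,\|\mathcal{P}_L\bhp\|$, and then identifies the unique vector achieving equality at every step; the fact that the optimum lies on $\partial c_k(y)$ falls out algebraically because equality in the first step forces $\langle \bhc, \bhl\rangle = 0$. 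You instead establish constraint activity first, by a local perturbation along the geodesic toward $\bhp$ (a KKT-flavored contradiction argument), and only then solve the equality-constrained problem on the equator $L \cap C^H$ by a restricted Cauchy--Schwarz. The two arguments encode the same information --- your geodesic perturbation is the geometric counterpart of the paper's step (i) --- but yours separates the ``where'' from the ``what,'' which makes the role of the non-parallelism hypothesis ($\mathcal{P}_L\bhp \neq \boldsymbol{0}$, hence a nondegenerate objective on the equator) especially transparent, while the paper's version is more self-contained and avoids any appeal to geodesics or continuity. Two small points worth tightening: state explicitly that a maximizer exists (the feasible set is nonempty and compact, as the paper notes) so that ``any maximizer lies in $L$'' plus ``the restricted problem has a unique solution'' actually yields the claim; and note that your assertion that the optimal value is positive rests on first checking that $r\,\mathcal{P}_L\bhp/\|\mathcal{P}_L\bhp\|$ is feasible, which you use implicitly.
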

\begin{proof}
  For simplicity, we will suppress $k,\bhp$, and $y$ notationally in the proof, i.e., $\bc\coloneqq\bc_k(\bhp;y)$, and $\bhl\coloneqq\bhl_k(y)$. Since $C^H \cap c_k(y)$, is non-empty and compact, there is at least one solution to \eqref{eq:opt3-3}. 
  
Following similar computations to the proof to \Cref{lem: gequiv}, it can be shown that
  \begin{align}\label{eq:opt-equiv3}
    \bc = \argmax_{\bhv \in C^H \cap c_k(y)}\langle \bhv,\bhp\rangle,
\end{align}
is equivalent to \eqref{eq:opt3-3}. If $\bhp \in C^H \cap c_k(y)$, then $\bc = \bhp$ is the unique solution to \eqref{eq:opt-equiv3} by the Cauchy-Schwarz inequality, which verifies part of \eqref{eq:hplaneproj2}. Thus, the remainder of the proof assumes $\bhp$ is not in the feasible set. Let $\bhc$ be any solution to \eqref{eq:opt-equiv3}. Since $\bhc$ lies in $c_k(y)$ and since $\bhp$ lies in $C^H$ but is not feasible, then we have
  \begin{align*}
    \left\langle \bhc, \bhl \right \rangle &\leq 0, & \left\langle \bhp, \bhl \right\rangle > 0.
  \end{align*}
  By the above inequalities, any solution $\bhc$ to \eqref{eq:opt-equiv3} satisfies,
  \begin{align*}
    \langle \bhc, \bhp\rangle &= \left\langle \mathcal{P}_L\bhc, \mathcal{P}_L\bhp\right\rangle + \left\langle (I - \mathcal{P}_L) \bhc, (I - \mathcal{P}_L) \bhp \right\rangle = \left\langle \mathcal{P}_L\bhc, \mathcal{P}_L\bhp\right\rangle + \left\langle \bhc, \bhl\right\rangle\left\langle \bhp, \bhl\right\rangle,\\
    &\stackrel{\mathrm{(i)}}{\leq} \left\langle \mathcal{P}_L\bhc, \mathcal{P}_L\bhp\right\rangle 
     \stackrel{\mathrm{(ii)}}{\leq} \|\mathcal{P}_L\bhc\|\,\|\mathcal{P}_L\bhp\| 
     \stackrel{\mathrm{(iii)}}{\leq} \|\bhc\|\,\|\mathcal{P}_L\bhp\| 
     = \|\bhp\|\,\|\mathcal{P}_L\bhp\|
  \end{align*}
  The choice $\bhc = \bc$ in \eqref{eq:hplaneproj2} is the unique solution that achieves equality in (i), (ii), and (iii) above. To see this, first note that $\bc$ is feasible since it lies in both $C^H$ and $c_k(y)$, and is well-defined since $\bhp$ is not parallel to $\bhl_k(y)$ and hence $\mathcal{P}_L \bhp \neq 0$. Equality in (i) and (iii) can be established by noting that $\bc \in L$, so that $\left\langle \bc, \bhl \right\rangle = 0$ and $\mathcal{P}_L \bc = \bc$. Equality in (ii) is achieved if and only if $\mathcal{P}_L \bc = \bc$ has the same direction as $\mathcal{P}_L \bhp$, which the choice \eqref{eq:hplaneproj2} satisfies. This also shows that $\bc$ is the only vector that achieves this equality, and hence \eqref{eq:opt-equiv3} (equivalently, \eqref{eq:opt3-3}) has a unique solution \eqref{eq:hplaneproj2}.
\end{proof}
\begin{remark}\label{rem:rescale}
  The solution \eqref{eq:hplaneproj2} implies that when $\bhp$ is not feasible and is not parallel to $\bhl_k(y)$, the solution to \eqref{eq:opt3-3} can be computed by first computing a Euclidean projection onto the hyperplane $L$, and the simply rescaling this projection to have norm $\|\bhp\|$. We exploit this fact in algorithms.
\end{remark}
\subsection{A Greedy Approach}\label{ssec:greedy}
We first introduce a new notation for the spherical projection
\begin{align}\label{eq:sphereproj}
&\bc_k(\bhp;y) \coloneqq \mathcal{P}^{s}_{c_k(y)}\bhp,
\end{align}
which by Theorem \ref{thm:gprojectionhalfspace} is well-defined for every $\bhp$ that is not a multiple of $\bhl_k(y)$. 

A greedy procedure, in the spirit of the greedy algorithm of \cite{zala2020structure}, iteratively updates $\bhp$ by repeatedly identifying most-violated constraints. Defining $\bhp^0 = \bhp$, and using $\bhp^j$ to denote the iterate at step $j$, we seek to compute,
\begin{align}\label{eq:subspacegreedyproj}
  \bhp^{j+1} &= \mathcal{P}^s_{c_{k^\ast}(y^\ast)} \bhp^{j}, & (k^\ast, y^\ast) &\coloneqq \argmax_{k \in [K], y \in \omega_k} d_r(\bhp^{j}, c_k(y)),
\end{align}
for $j \geq 1$. \Cref{lem:yregion} first allows us to conclude that the set of $(k,y)$ such that $d_r(\bhp, c_k(y) \cap C^H) > 0$ is equal to the set of $(k,y)$ such that $\bhp \not\in c_k(y)$. 
\begin{lemma}\label{lem:yregion}
Let $\bhp$ be the solution to the current iteration, then
\begin{align}
    d_r(\bhp, c_k(y) \cap C^H) > 0 \Leftrightarrow \bhp \not\in c_k(y),
\end{align}
where $r = \Vert\bhp\Vert$.
\end{lemma}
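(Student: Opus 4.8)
The plan is to reduce the claimed equivalence to the elementary metric fact that, on a compact space, a point has positive distance to a closed set exactly when it lies outside that set. First I would observe that since $\bhp \in C^H$ always holds, the membership $\bhp \in c_k(y)$ is equivalent to $\bhp \in c_k(y) \cap C^H$. Hence the right-hand condition $\bhp \notin c_k(y)$ may be replaced by $\bhp \notin c_k(y) \cap C^H$, and the whole statement becomes
$d_r(\bhp, c_k(y)\cap C^H) > 0 \Leftrightarrow \bhp \notin c_k(y) \cap C^H$, a general statement about the intrinsic distance $d_r$ to the closed hemisphere $c_k(y)\cap C^H$.

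For the ($\Rightarrow$) direction I would argue by contrapositive: if $\bhp \in c_k(y)$, then $\bhp \in c_k(y)\cap C^H$, and the distance from a point to a set containing it is zero, so $d_r(\bhp, c_k(y)\cap C^H) = 0$. This contradicts positivity; hence $d_r > 0$ forces $\bhp \notin c_k(y)$. This direction is immediate.

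The ($\Leftarrow$) direction carries the real content. Assuming $\bhp \notin c_k(y)$, equivalently $\langle \bhp, \bhl_k(y)\rangle > 0$, I would show the infimum defining $d_r(\bhp, c_k(y) \cap C^H)$ is strictly positive. The key is compactness: $c_k(y) \cap C^H$ is a closed hemisphere, hence a nonempty closed and bounded (thus compact) subset of $\R^N$ (nonemptiness follows from $\|\bhl_k(y)\| = 1$), and $\bhv \mapsto d_r(\bhp, \bhv)$ is continuous on it. Therefore the infimum is attained at some $\bhv^\ast \in c_k(y) \cap C^H$. Since $\bhp \notin c_k(y) \cap C^H$ we have $\bhv^\ast \neq \bhp$, and by \eqref{eq:spheredist} the intrinsic distance between two distinct points of the sphere is strictly positive (the normalized inner product in $r\arccos\langle \bhp/\|\bhp\|,\bhv^\ast/\|\bhv^\ast\|\rangle$ is strictly below $1$ when $\bhp \neq \bhv^\ast$), so $d_r(\bhp, c_k(y)\cap C^H) = d_r(\bhp, \bhv^\ast) > 0$.

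The only genuinely non-cosmetic step is upgrading the pointwise fact ``every feasible $\bhv$ satisfies $d_r(\bhp,\bhv) > 0$'' to ``the infimum is positive'': pointwise positivity alone does not suffice, since an infimum of strictly positive numbers can still vanish. Thus the compactness/attained-minimum argument is the essential ingredient, while nonemptiness of the hemisphere and continuity of $d_r$ are routine.
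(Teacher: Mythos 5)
Your proof is correct, but it takes a genuinely different route from the paper's. You treat the statement as a soft topological fact: the closed hemisphere $c_k(y)\cap C^H$ is a nonempty compact subset of $\R^N$, the intrinsic distance to $\bhp$ is continuous on it, so the infimum is attained at some $\bhv^\ast$, and since $\bhp$ lies outside the set we get $\bhv^\ast\neq\bhp$ and hence $d_r(\bhp,\bhv^\ast)>0$. This argument is clean, requires nothing beyond compactness, and would work verbatim for any closed subset of the sphere, not just hemispheres. The paper instead proves the lemma by an explicit computation: it writes the Euclidean distance to the half-space as $\mathrm{dist}(\bhp,c_k(y))=\|\bhp-\mathcal{P}_L\bhp\|=\|\bhp\|\sin\theta_k(y)$, where $\theta_k(y)\in[0,\pi/2]$ is the angle between $\bhp$ and its spherical projection $\bc_k(\bhp;y)$ from \Cref{thm:gprojectionhalfspace}, and notes that $d_r(\bhp,c_k(y)\cap C^H)=r\,\theta_k(y)$; the equivalence then follows from the chain $\mathrm{dist}>0\Leftrightarrow\sin\theta_k(y)>0\Leftrightarrow\theta_k(y)>0\Leftrightarrow d_r>0$. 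The payoff of the paper's quantitative version is what comes immediately after the lemma: since $\theta\mapsto\sin\theta$ is increasing on $[0,\pi/2]$, maximizing the geodesic distance over $(k,y)$ is equivalent to maximizing the Euclidean distance, i.e.\ minimizing the signed distance \eqref{eq:signedEdist} --- which is the identity \eqref{eq:mostviolatedcond} that makes the greedy step computable. Your argument proves the stated equivalence but does not yield that monotone relation, so if you adopted it you would still need the $\sin\theta$ computation separately to justify \eqref{eq:mostviolatedcond}.
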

\begin{proof}
    Let $\text{dist}(\cdot,\cdot)$ be the Euclidean distance function, then
    \begin{align}\label{eq:eudist}
      \text{dist}(\bhp, c_k(y)) = \min_{\boldsymbol{s}\in c_k(y)}\Vert\bhp - \boldsymbol{s}\Vert_2=\Vert\bhp - \mathcal{P}_L\bhp\Vert_2 = \Vert\bhp\Vert\sin\theta_k(y),        
    \end{align}
    where $L\coloneqq \partial c_k(y)$ is the boundary of the half-space $c_k(y)$, and $\theta_k(y) = \arccos\left\langle\bhp/r,\bc_k(\bhp;y)/r\right\rangle$ is the angle between $\bhp$ and its spherical projection $\bc_k(\bhp;y)$ onto the half-space $c_k(y)$. The last equality in \eqref{eq:eudist} is true since $\mathcal{P}_L\bhp$ and $\bc_k(\bhp;y)$ have the same direction (\Cref{thm:gprojectionhalfspace}). The angle $\theta_k(y)$ is the angle between the vector $\bhp$ and the plane $L$, thus $\theta_k(y)\in[0,\pi/2]$.
    
    For any $(k,y)$, a direct calculation using the Pythagorean theorem and the definition of the intrinsic distance $d_r(\cdot,\cdot)$ yields
    \begin{align*}
        \bhp \not\in c_k(y)&\Leftrightarrow \text{dist}(\bhp, c_k(y)) > 0,\\
        &\Leftrightarrow \Vert\bhp\Vert\sin\theta_k(y) > 0,\\
        &\Leftrightarrow \theta_k(y) > 0,\\        
        &\Leftrightarrow d_r(\bhp, c_k(y) \cap C^H) > 0,
    \end{align*}
\end{proof}
The proof to \Cref{lem:yregion} motivates the following relation
\begin{equation}\label{eq:mostviolatedcond}
\begin{aligned}
(k^\ast,y^\ast) &= \argmax_{k \in [K], y \in \omega_k}d_r(\bhp, c_k(y))\\
&= \argmax_{k \in [K], y \in \omega_k}(\text{dist}(\bhp, c_k(y)))\\
&=\argmin_{y\in\omega_k, k \in [K]}\text{sdist}(\bhp,c_k(y)),
\end{aligned}
\end{equation}
where the Euclidean signed distance between $\bhp$ and the Euclidean half-space $c_k(y)$ can be computed by (see \cite{zala2020structure})
\begin{align}\label{eq:signedEdist}
    \text{sdist}(\bhp, c_k(y)) = -\langle\bhl_k(y),\bhp\rangle.
\end{align}  
Equations \eqref{eq:mostviolatedcond}-\eqref{eq:signedEdist} imply that, to determine the parameters for the geodesically farthest hemisphere, we only need to determine the parameters for the Euclidean-farthest hyperplane, which is a much easier computational task.

\Cref{alg:greedysphericalproj} summarizes the above iterative procedure of computing the solution to \eqref{eq:opt-equiv2} through greedy spherical projection.

\begin{algorithm}[H]
\setstretch{1.2}
\SetAlgoLined
\KwIn{the matrix $\bA$ and the observational vector $\boldsymbol{b}$, tolerance parameter $\delta \geq 0$}
\KwIn{constraints $\{(\bhl_k(y), \omega_k)\}_{k\in[K]}$}
Compute the unconstrained solution $\bhp$, e.g. via solving \eqref{eq:unconstrainedrn}.\;
  \While{$\text{sdist}(\bhp,c_k(y))\leq-\delta$ for some $k\in[K],\;y\in \omega_k$}{
    compute $(y^*,k^*)$ via \eqref{eq:mostviolatedcond}\;
    update $\bhp$ via \eqref{eq:subspacegreedyproj}.
}
\Return $\bhp$
  \caption{Iterative greedy spherical projection to compute the solution to \eqref{eq:opt3-3}.}
\label{alg:greedysphericalproj}
\end{algorithm}
\subsection{An averaging approach}\label{ssec:averaging}
The greedy procedure above can lead to oscillatory behavior of the iteration trajectory. To mitigate this behavior, we introduce an averaging projection approach to suppress potential oscillatory behavior of iterates in the previous greedy approach. The notion of an average position of a collection of points on the sphere is defined by the \textit{Karcher mean}, which is a natural extension of the Euclidean weighted average. 
\begin{definition}\label{def:karcher-mean}[Karcher mean]
  Let $S\subset\mathbbm{R}^{N}$ be the sphere centered at the origin with radius $r$. Let $\bq_1,\cdots, \bq_J$ be $J$ points lying on $S$ associated with nonnegative convex weights $w_1,\cdots, w_J \in [0,1]$. The \textit{Karcher mean} is given by the solution to 
\begin{align}\label{eq:sphericalavg}
    \bq = \argmin_{\bx \in S}\left(\frac{1}{2}\sum_{i = 1}^n w_i\cdot d_r^2(\bx, \bq_i)\right)\coloneqq \argmin_{\bx \in S} f(\bx).
\end{align}
\end{definition}
The Karcher mean as defined above is unique under mild assumptions.
\begin{theorem}[\cite{buss2001spherical}, Theorem 1]\label{thm:savgunique}
  With $S$ the radius-$r$ origin-centered sphere in $\mathbb{R}^N$, suppose that given points $\bq_1,\cdots, \bq_J$ all lie in a closed hemisphere $\mathcal{H} \subset S$, with at least one point $\bq_j$ in the interior of $\mathcal{H}$ with $w_j > 0$. Then $f(\bx)$ defined in \eqref{eq:sphericalavg} has a single critical point $\bq$ in the interior of $\mathcal{H}$, and this point $\bq$ is the global minimum of $f$, hence the unique Karcher mean.
\end{theorem}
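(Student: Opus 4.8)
The plan is to first rescale to the unit sphere: since $d_r = r\,d$ with $d$ the intrinsic distance on $\mathbb{S}^{N-1}$, the objective $f$ merely scales by $r^2$ and its critical points are unchanged, so I may assume $r=1$ and $S=\mathbb{S}^{N-1}$. Because $f$ is continuous and $\mathcal{H}$ is compact, a global minimizer $\bq$ exists; the first substantive step is to show $\bq$ lies in the \emph{interior} $\mathcal{H}^\circ$, since all the subsequent smoothness and gradient computations are only valid away from the cut locus.

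\textbf{Interiority via the boundary gradient.} I would use the Riemannian gradient $\nabla f(\bx) = -\sum_{i} w_i \log_{\bx}(\bq_i)$, where $\log_{\bx}$ is the inverse of the exponential map in \eqref{eq:expmapunit}; this is well defined because all points lie in a hemisphere, so no antipodal pair occurs. At a boundary point $\bx\in\partial\mathcal{H}$ with inward unit normal $\bs{n}$, the key geometric fact is that $\partial\mathcal{H}$ is a totally geodesic subsphere: for every $\bq_i\in\partial\mathcal{H}$ the vector $\log_{\bx}(\bq_i)$ stays tangent to $\partial\mathcal{H}$, so $\langle\log_{\bx}(\bq_i),\bs{n}\rangle=0$, whereas every interior $\bq_i$ satisfies $\langle\log_{\bx}(\bq_i),\bs{n}\rangle>0$. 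Since the hypothesis supplies at least one interior point with $w_j>0$, the inward derivative $\langle\nabla f(\bx),\bs{n}\rangle=-\sum_i w_i\langle\log_{\bx}(\bq_i),\bs{n}\rangle<0$, so $f$ strictly decreases moving inward and no minimizer can lie on $\partial\mathcal{H}$. Hence $\bq\in\mathcal{H}^\circ$ is an interior critical point, satisfying $\sum_i w_i\log_{\bq}(\bq_i)=0$.

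\textbf{Uniqueness.} For uniqueness I would work with the Hessian of the half-squared distance, which along geodesics acts as the identity in the radial direction toward $\bq_i$ and as multiplication by $\rho_i\cot\rho_i$ in the orthogonal tangent directions, with $\rho_i=d(\cdot,\bq_i)$; symbolically
\[
\mathrm{Hess}\,\tfrac12 d^2(\cdot,\bq_i)=\Pi_{\parallel,i}+(\rho_i\cot\rho_i)\,\Pi_{\perp,i},
\]
which is positive definite exactly when $\rho_i<\pi/2$. The plan is to show every interior critical point is a \emph{strict} local minimum (positive-definite $\mathrm{Hess}\,f$), and then to rule out multiple critical points: a smooth function on the topologically trivial open hemisphere whose gradient points strictly inward along the boundary, and all of whose critical points are strict local minima, must have exactly one such point (a mountain-pass/degree argument, with the inward boundary behavior furnishing the needed compactness to exclude a second minimizer or an intervening saddle).

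\textbf{Main obstacle.} The crux, and the reason plain geodesic convexity is unavailable, is precisely the loss of convexity of $d^2(\cdot,\bq_i)$ beyond distance $\pi/2$: since the data need only lie in the \emph{closed} hemisphere, a point near the equator may sit almost antipodally ($\rho_i\to\pi$) to a point near the opposite edge of $\mathcal{H}^\circ$, making the transverse term $\rho_i\cot\rho_i$ arbitrarily negative, so $f$ is genuinely non-convex on $\mathcal{H}^\circ$ in general. The hard step is therefore to exploit containment in a closed hemisphere together with the strictly-interior positive-weight point to confine any critical point to the geodesic ball of radius at most $\pi/2$ on which the Hessian is controlled, thereby upgrading \textit{critical point} to \textit{nondegenerate local minimum}. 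Establishing this confinement, and in particular handling the borderline case where data lie on the bounding equator (which is exactly what separates this statement from the classical radius-$<\pi/2$ barycenter result), is where I expect the real difficulty to lie.
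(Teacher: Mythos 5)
First, a point of reference: the paper does not prove this statement at all --- it is imported verbatim as Theorem~1 of \cite{buss2001spherical} and used as a black box, so there is no internal proof to compare yours against; the comparison has to be with the Buss--Fillmore argument itself. Your reduction to $r=1$, the existence of a minimizer over the compact set $\mathcal{H}$, and the boundary-gradient argument forcing any minimizer into $\mathcal{H}^\circ$ are all sound in outline. One slip there: your justification that $\log_{\bx}(\bq_i)$ is ``well defined because all points lie in a hemisphere, so no antipodal pair occurs'' is false --- a point $\bx$ on the bounding equator $\partial\mathcal{H}$ can be exactly antipodal to a data point $\bq_i\in\partial\mathcal{H}$ (both lie in the \emph{closed} hemisphere). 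This is repairable (at such a configuration $d^2(\cdot,\bq_i)$ still has nonpositive one-sided inward derivative, so the strict decrease furnished by the interior point $\bq_j$ with $w_j>0$ survives), but as written the step is wrong, and it matters precisely in the borderline equatorial case you yourself flag as the delicate one. You should also add the (easy) reflection argument showing the global minimum of $f$ over the whole sphere is attained in $\mathcal{H}$, since the theorem asserts global, not merely hemisphere-restricted, minimality.

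The genuine gap is the uniqueness step, and you have essentially conceded it rather than closed it. Your Poincar\'e--Hopf/degree scheme is a legitimate strategy: since $\nabla f$ points strictly outward along $\partial\mathcal{H}$, the indices of its zeros in $\mathcal{H}^\circ$ sum to $\chi(\mathcal{H})=1$, and if every zero were a nondegenerate local minimum (index $+1$) there could be only one. But the entire burden of the theorem then falls on proving that every interior critical point \emph{is} a nondegenerate local minimum, and your own Hessian formula shows why this does not follow from anything you have established: a critical point near the equator can sit at distance $\rho_i>\pi/2$ (indeed arbitrarily close to $\pi$) from equatorial data points, making the transverse eigenvalues $\rho_i\cot\rho_i$ arbitrarily negative, and you give no mechanism --- no confinement of critical points to a ball of radius $\pi/2$, no cancellation estimate against the positive contributions --- to control the sum. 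Naming this as ``where I expect the real difficulty to lie'' is an accurate diagnosis, but it means the proof attempt stops exactly at the theorem's nontrivial content. If you want to complete this, consult the actual argument in \cite{buss2001spherical}, which handles uniqueness under the closed-hemisphere hypothesis directly rather than through pointwise positive-definiteness of the Hessian; as it stands, your proposal is an honest and mostly correct reduction of the problem, not a proof of it.
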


The definition of the Karcher mean in \eqref{eq:sphericalavg} can be extended to a collection of infinitely many points by integration, and our averaged projection algorithm is based on this generalized Karcher mean. Let $\bhp^0 = \bhp$ be the first iterate in the algorithm. 
To compute the next iterate, the averaging algorithm first identifies all the parameters $(y,k)$ for which the associated linear constraints are violated,
\begin{align}
    \omega^j_{k^{-}} = \{y \in \omega_k\;|\;\text{sdist}(\bhp^j,c_k(y))<0\}.
\end{align}
Instead of projecting onto the most violated constraint (as in the previous section) we seek a point that minimizes the Karcher mean objective over all violated constraints. With $r = \Vert\bhp\Vert$, the averaged position $\bhp^{j+1}$ at the next iteration is given by
\begin{align}\label{eq:weightedavgprojection}
    \bhp^{j+1} =  \argmin_{\bx\in C^H}\left(\frac{1}{2}\sum_{k\in \mathcal{D}_j^{-}}\int_{\omega^j_{k^{-}}}
    d^2_r(\bx, \bc_k(\bhp^j;y))
    w_k(\bhp^j, y) 
    dy\right),
\end{align}
where $\mathcal{D}_j^{-} = \{k\in[K]\;\;\vert\;\;\omega^j_{k^-}\ne\varnothing\}$ is the set of the indexes where the corresponding constraints are violated at $j$th iteration, and the weight, 
\begin{align}\label{eq:weight}
  w_k(\bhp^j; y) \coloneqq \left(\frac{d_r^2(\bhp^j,\bc_k(\bhp^j;y))}{\sum_{\ell \in \mathcal{D}_j^-} \int_{\omega^j_{\ell^{-}}}d_r^2(\bhp^j,\bc_\ell(\bhp^j;z))dz}\right)
\end{align}
is introduced for each spherical projection $\bc_k(y)$ in order to prioritize updates that mitigate the impact of the more violated constrained sets. In practice, we approximate the integral \eqref{eq:weightedavgprojection} via quadrature with positive weights, i.e., 
\begin{align}\label{eq:discreteweightedavgprojection}
    \bhp^{j+1} =  \argmin_{\bx\in C^H}\left(\frac{1}{2}\sum_{k\in \mathcal{D}_j^{-}}\sum_{q=1}^{Q_k}w_{k,q}d^2_r(\bx, \bc_k(\bhp^j;y_{k,q}))\right),
\end{align}
where $Q_k$ is the number of the quadrature points associated with the $k$th constraint, and $\{w_{k,q}\}_{k\in[K],q\in[Q_k]}$ are the product of the (positive) quadrature weight with an approximations to the weight \eqref{eq:weight} at the quadrature points $\{y_{k,q}\}_{k\in[K],q\in[Q_k]}$.

All the discussion above is provided in the context of assuming that the hemisphere condition in Theorem \ref{thm:savgunique} holds. The following lemma shows that all the candidate spherical projection $\bc_k(\bhp^j;y_{k,q})$ indeed lie on the same hemisphere.
\begin{lemma}\label{lem:uniquehemisphere}
    The spherical projections $\bc_k(\bhp;y)$ defined by \eqref{eq:hplaneproj2} with $\bhl = \bhl_k(y), k \in [K]$ are always on the hemisphere
    \begin{align}\label{eq:commonhemisphere}
        \mathcal{H} = \{\bx\;|\; \Vert\bx\Vert = \Vert\bhp\Vert, \langle \bhp, \bx \rangle\ge 0\},
    \end{align}
    for any $y$ and $k$.
\end{lemma}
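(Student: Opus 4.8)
The plan is to verify directly that $\bc_k(\bhp;y)$ satisfies both defining conditions of the hemisphere $\mathcal{H}$ in \eqref{eq:commonhemisphere}, namely the norm condition $\|\bc_k(\bhp;y)\| = \|\bhp\|$ and the half-space condition $\langle \bhp, \bc_k(\bhp;y)\rangle \ge 0$. Since $\bc_k(\bhp;y)$ is given piecewise by \eqref{eq:hplaneproj2}, I would split into the two cases according to whether $\bhp$ is feasible for the constraint indexed by $(k,y)$, and check both conditions in each branch.

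The norm condition is immediate in both branches of \eqref{eq:hplaneproj2}: the feasible branch returns $\bhp$ itself, while the infeasible branch returns a vector that is explicitly rescaled to have norm $\|\bhp\|$. Hence only the inner-product condition requires argument. In the feasible case $\bc_k(\bhp;y)=\bhp$, so $\langle \bhp,\bc_k(\bhp;y)\rangle = \|\bhp\|^2 \ge 0$ trivially. In the infeasible case, substituting $\bc_k(\bhp;y) = \|\bhp\|\,\mathcal{P}_L\bhp/\|\mathcal{P}_L\bhp\|$ gives
\begin{align*}
  \langle \bhp, \bc_k(\bhp;y)\rangle = \frac{\|\bhp\|}{\|\mathcal{P}_L\bhp\|}\langle \bhp, \mathcal{P}_L\bhp\rangle.
\end{align*}

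The crux is then the observation that $\mathcal{P}_L$, being the Euclidean orthogonal projection onto the subspace $L = \partial c_k(y)$, is self-adjoint and idempotent, so that decomposing $\bhp = \mathcal{P}_L\bhp + (I-\mathcal{P}_L)\bhp$ into orthogonal components yields $\langle \bhp, \mathcal{P}_L\bhp\rangle = \langle \mathcal{P}_L\bhp, \mathcal{P}_L\bhp\rangle = \|\mathcal{P}_L\bhp\|^2 \ge 0$. Feeding this back gives $\langle \bhp, \bc_k(\bhp;y)\rangle = \|\bhp\|\,\|\mathcal{P}_L\bhp\| \ge 0$, which establishes the half-space condition and completes the argument for every $k$ and $y$.

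I do not anticipate a genuine obstacle here: the lemma is an elementary verification whose only substantive ingredient is the positive-semidefiniteness of orthogonal projectors. The single point worth flagging is well-definedness, since the infeasible branch of \eqref{eq:hplaneproj2} requires $\mathcal{P}_L\bhp \ne 0$; this holds precisely under the hypothesis (invoked through \Cref{thm:gprojectionhalfspace}) that $\bhp$ is not parallel to $\bhl_k(y)$, so all expressions above are meaningful.
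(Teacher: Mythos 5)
Your proposal is correct and follows essentially the same route as the paper: the norm condition is immediate from \eqref{eq:hplaneproj2}, and the inner-product condition reduces via idempotence/self-adjointness of $\mathcal{P}_L$ to $\langle \bhp, \bc_k(\bhp;y)\rangle = \|\bhp\|\,\|\mathcal{P}_L\bhp\| \ge 0$, which is exactly the paper's one-line computation. Your explicit treatment of the feasible branch and the well-definedness remark are minor additions that do not change the argument.
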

\begin{proof}
    Since $\Vert\bc_k(\bhp;y)\Vert = \Vert\bhp\Vert$, we only need to verify the second condition in \eqref{eq:commonhemisphere}. Using \eqref{eq:hplaneproj2}, a direct computation yields,
    \begin{align*}
        \left\langle \bhp, \bc_k(\bhp;y)\right\rangle = \Vert\bhp\Vert\Vert \mathcal{P}_{c_{k}(y)}\bhp\Vert\ge 0.
    \end{align*}
\end{proof}
All the above is almost sufficient to guarantee that the algorithm described by \eqref{eq:discreteweightedavgprojection} has a unique solution. The last obstacle we have yet to overcome is to ensure that the points $\bc_k(\bhp^j;y_{k,q})$ are uniquely defined.  
To ensure this, we make the following assumption.
\begin{assumption}\label{assump:notparallel}
    $\bhl_k(y)$ is not parallel to $\bhp^j$ for all $(k,y)$ pairs for $y\in \omega_k^-$.
\end{assumption}
\Cref{assump:notparallel} is necessary to ensure unique existence of the spherical projections $\bc_k(\bhp^j;y_{k,q})$. 
Although we cannot yet theoretically justify of \Cref{assump:notparallel}, in all of our numerical experiments, \Cref{assump:notparallel} holds. Under this assumption, we can prove uniqueness of the update \eqref{eq:discreteweightedavgprojection}.
\begin{proposition}\label{prop:uniquewavg}
  Under \Cref{assump:notparallel}, the solution $\bhp^{j+1}$ to \eqref{eq:discreteweightedavgprojection} is unique. 
\end{proposition}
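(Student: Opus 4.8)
The plan is to invoke the Buss--Fillmore uniqueness result (\Cref{thm:savgunique}) for the Karcher mean, observing that \eqref{eq:discreteweightedavgprojection} is exactly the minimization defining a weighted Karcher mean of the finite point set $\{\bc_k(\bhp^j;y_{k,q})\}$ on the sphere $C^H$ of radius $r = \Vert\bhp\Vert$ (compare \Cref{def:karcher-mean}). To apply that theorem I must verify three things in turn: that each sample point $\bc_k(\bhp^j;y_{k,q})$ is itself uniquely defined, that all the sample points lie in one common closed hemisphere, and that at least one of them lies in the \emph{interior} of that hemisphere and carries a strictly positive weight.

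First I would dispatch well-definedness and the common-hemisphere requirement, both of which are immediate from earlier results. Under \Cref{assump:notparallel} each $\bhl_k(y_{k,q})$ is non-parallel to $\bhp^j$, so \Cref{thm:gprojectionhalfspace} applies and the spherical projection $\bc_k(\bhp^j;y_{k,q})$ given by \eqref{eq:hplaneproj2} exists and is unique. The common-hemisphere requirement is precisely \Cref{lem:uniquehemisphere}: every $\bc_k(\bhp^j;y_{k,q})$ lies on the hemisphere $\mathcal{H}$ defined in \eqref{eq:commonhemisphere}.

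The step I expect to demand the most care is the interior-point-with-positive-weight condition, which is where \Cref{assump:notparallel} does its essential work. Using the explicit formula \eqref{eq:hplaneproj2} for the infeasible case I would compute $\langle \bhp, \bc_k(\bhp^j;y)\rangle = \Vert\bhp\Vert\,\Vert\mathcal{P}_L\bhp\Vert$ with $L = \partial c_k(y)$; this inner product is \emph{strictly} positive precisely because $\bhp^j$ is not parallel to $\bhl_k(y)$, so its component $\mathcal{P}_L\bhp$ along the bounding hyperplane does not vanish. Hence every sample point lies strictly interior to $\mathcal{H}$, and not merely on its boundary. Separately, each weight $w_{k,q}$ is a product of a positive quadrature weight with the quantity \eqref{eq:weight}, and the latter is positive because the sum ranges only over violated constraints $y \in \omega_k^-$, for which \Cref{lem:yregion} gives $d_r(\bhp^j,\bc_k(\bhp^j;y)) > 0$. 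Since $\mathcal{D}_j^-$ is nonempty whenever the averaging update is invoked, at least one interior sample point carries a positive weight, so the hypotheses of \Cref{thm:savgunique} hold.

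With those hypotheses confirmed, \Cref{thm:savgunique} yields a single critical point of the objective in \eqref{eq:discreteweightedavgprojection} which is its unique global minimizer, establishing the asserted uniqueness of $\bhp^{j+1}$. The only genuinely delicate point is the strict-interior claim, and the inner-product computation above shows it follows cleanly from \Cref{assump:notparallel}, so no further obstruction remains.
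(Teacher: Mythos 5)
Your proposal is correct and follows essentially the same route as the paper: both reduce \eqref{eq:discreteweightedavgprojection} to a weighted Karcher mean, invoke \Cref{lem:uniquehemisphere} for the common hemisphere, use \Cref{assump:notparallel} to place the candidate points in the interior of $\mathcal{H}$, and conclude via \Cref{thm:savgunique}. Your explicit computation $\left\langle \bhp, \bc_k(\bhp^j;y)\right\rangle = \Vert\bhp\Vert\,\Vert\mathcal{P}_L\bhp\Vert > 0$ and the check that the weights are strictly positive simply spell out details the paper leaves implicit.
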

\begin{proof}
Under \Cref{assump:notparallel} and \Cref{lem:uniquehemisphere}, the candidate points $\bc_k(\bhp^j;y_{k,q})$ are all in the interior of $\mathcal{H}$. Therefore, from \Cref{thm:savgunique}, the solution $\bhp^{j+1}$ to \eqref{eq:discreteweightedavgprojection} is unique and furthermore lies in the interior of $\mathcal{H}$.
\end{proof}

Algorithms that compute the average spherical projection by solving the optimization problem \eqref{eq:discreteweightedavgprojection} can be adapted from \cite[Algorithms A1 or A2]{buss2001spherical} or \cite[Equation 10]{krakowski2007computation}. 

\subsection{A Hybrid Approach}\label{ssec:hybrid}
We propose a final algorithm: the averaging algorithm of the previous section results in less oscillatory iterate trajectories, but moves relatively slowly. The algorithm in this section combines the ideas of the greedy and averaging approach. First we denote the greedy update \eqref{eq:subspacegreedyproj} at the $j$th iteration by $\bhp^{j+1}_g$ and the average update \eqref{eq:weightedavgprojection} by $\bhp^{j+1}_a$. The hybrid update we propose moves in the direction of the averaged update $\bhp^{j+1}_a$, but with a distance defined by the greedy update $\bhp^{j+1}_g$. Specifically, at the $j$th iteration,
\begin{enumerate}[i.]
    \item Compute the geodesic projection $\bhp^{j+1}_g$ and the average projection $\bhp^{j+1}_a$ via \eqref{eq:subspacegreedyproj} and \eqref{eq:discreteweightedavgprojection}, respectively.
    \item If $d_r(\bhp^j,\bhp^{j+1}_g)/r < 10^{-6}$, i.e., the most violated constraint is very close to the current update, we simply perform the greedy update, setting $\bhp^{j+1} = \bhp^{j+1}_g$ . 
    \item Otherwise, we compute $\bhp^{j+1}$ by moving $\bhp^j$ along the unique geodesic from $\bhp^j$ to $\bhp^{j+1}_a$ by a distance given by the intrinsic distance between $\bhp^j$ and $\bhp^{j+1}_g$. Let $\btp^j, \btp^j_a$, and $\btp^j_g$ be the unit-norm versions of $\bhp^j$, $\bhp^{j+1}_a$, and $\bhp^{j+1}_g$, respectively. Then the update we propose is 
    \begin{align}\label{eq:hybridupdate}
        &\btp^{j+1} = 
        \exp_{\btp^{j}}\left(d(\btp^j,\btp^{j+1}_g)\bs{v}\right),\nonumber\\        
        &\bhp^{j+1} = \| \bhp^j\| \btp^{j+1},
    \end{align}
    where $\bs{v}$ is the unit-speed velocity at the base point $\btp^j$ of the geodesic segment leading to $\btp^{j+1}_a$, i.e., $\bs{v} = \gamma'_{\btp^{j}\btp^{j+1}_a}(0)$.
    
\end{enumerate}
We use the greedy update in step 2 above since in this case we are relatively close to the solution, and so typically the greedy procedure converges very quickly.

\section{Numerical Experiments}\label{sect:numeric}
Throughout this section, we take $M=N$ observations, and the observation functionals $\{\phi_n\}_{n\in [N]}$ are chosen to be the projection functionals $\phi_n(\cdot) \coloneqq \left\langle \cdot, v_n \right\rangle$ onto the given subspace $V$. We denote the unknown function by $u$, the $H$-best projection onto $V$ by $v$, the norm-constrained solution (the solution to \eqref{eq:constrainedrn}) by $\vnc$, and the linearly-constrained solution by $\vlc$. I.e., $\vlc$ is the solution to \eqref{eq:constrainedrn} but with only the linear inequality constraints,
\begin{align*}
  \vlc &\coloneqq \sum_{n \in [N]} w_n v_n, & \bs{w} &\coloneqq \argmin_{\widehat{\bs{v}} \in C^0} \frac{1}{2} \left\| \widehat{\bs{v}} - \widehat{\bs{p}} \right\|^2.
\end{align*}
For other choices of observation functionals, e.g., pointwise observations (collocation-based approximations), our theory and algorithms can be generalized naturally. 

For our univariate examples, we consider the Sobolev spaces on a general interval $[a,b]$ as our Hilbert spaces $H$,
\begin{align}
&H^q([a,b])\coloneqq\left\{u:[a,b]\to\R\;\;\middle\vert\;\;\Vert u\Vert^2_{H^q}<\infty\right\},&&\Vert u\Vert^2_{H^q}\coloneqq\sum_{j=0}^q\int_{a}^{b}\left[u^{(j)}(x)\right]dx,
\end{align}
and choose the subspace $V$ according to the choice of the pair $(a,b)$,
\begin{equation}\label{eq:basis}
    \begin{aligned}
        &\text{if }(a,b) = (-1,1),\;&&\text{then }V = V^{\text{poly}}\coloneqq \text{span}\;\left\{\{x^n\}_{n=0}^{N-1}\right\},\\
        &\text{if }(a,b) = (0,\pi),\;&&\text{then }V = V^{\cos}\coloneqq \text{span}\;\left\{\{\cos nx\}_{n=0}^{N-1}\right\}.\\        
    \end{aligned}
\end{equation}
We will test our algorithms for $H^0 (=L^2)$, $H^1$, and $H^2$ using the linear constraint sets,
\begin{itemize}
    \item (Positivity) $U_0 \coloneqq\{u\in H\;\;\vert\;\;u(x)\ge 0\;\; \forall x\in [a,b]\}$
    \item (Monotonicity) $U_1 \coloneqq\{u\in H\;\;\vert\;\;u'(x)\ge 0\;\; \forall x\in [a,b]\}$
    \item (Convexity) $U_2 \coloneqq\{u\in H\;\;\vert\;\;u''(x)\ge 0\;\; \forall x\in [a,b]\}$
\end{itemize}
Although our theoretical result in \Cref{thm:uniqueness} does not guarantee the uniqueness of the solution when a boundedness constraint imposed, we still test our algorithms with imposing the constraint,
\begin{itemize}
    \item (Boundedness) $G_0 \coloneqq\{u\in H\;\;\vert\;\;u(x)\le 1\;\; \forall x\in [a,b]\}$,
\end{itemize}
for some of our tests. When a boundedness constraint is imposed, the hyperplane is an affine plane. In this case, we first project the current iteration to the affine hyperplane, then rescale the point with respect to the vertex $\boldsymbol{r}_0$ of the cone (the projection of the origin onto the affine plane) to the sphere, i.e., the projection \eqref{eq:sphereproj} is replaced by
\begin{align*}
    \bc = \mathcal{P}^s_H\bhp\coloneqq \boldsymbol{r}_0+\frac{\sqrt{\Vert\bhp\Vert^2- \Vert\boldsymbol{r}_0\Vert^2}}{\sqrt{\Vert\mathcal{P}_H\bhp\Vert^2 - \Vert\boldsymbol{r}_0\Vert^2}}(\mathcal{P}_H\bhp - \boldsymbol{r}_0).
\end{align*}

We will also introduce a metric to measure the change between the constrained solutions and the unconstrained solution:
\begin{align}
    \eta_{*} = \frac{\Vert v - v_*\Vert_H}{\Vert v - u\Vert_H},
\end{align}
where asterisk ``$*$'' on the subscript of $v$ can be either ``LC'' (the linearly-constrained approximation using the dissipative formulation in \cite{zala2020structure}) or ``NC'' (the non-dissipative procedure in this article). Since $v-u$ is $H$-orthogonal to $V$, the Pythagorean theorem implies,
$$
\Vert v_* - u\Vert_H = \sqrt{1+\eta^2}\Vert v - u\Vert_H^2.
$$
The quantity $\sqrt{1+\eta^2}$ can therefore be used to measure the error in a constrained solution relative to error in the unconstrained solution (which in this case is the $H$-best approximation from $V$). Values of $\eta$ that are $\mathcal{O}(1)$ indicate that the error committed by the constrained solution is comparable to that of the unconstrained solution. It is also interesting to measure the difference between the linearly-constrained solution and the norm-constrained solution $\Vert \vlc - \vnc\Vert$. In all of our experiments, the norm-constrained solution $\vnc$ differs only slightly from the linearly-constrained solution $\vlc$.

\Cref{alg:greedysphericalproj} is the greedy algorithm, but it is also the template for the average algorithm. To apply the average algorithm, one only needs to replace the update of $\bhp$ with \eqref{eq:discreteweightedavgprojection}. In line 5 of \Cref{alg:greedysphericalproj}, $\delta$ is set to be $10^{-10}$. In addition, we restrict the maximum number of iterations to be $10,000$ to avoid infinite loops.

\subsection{Performance Comparison of Algorithms}
In this section, we present the comparison of solutions from the linearly-constrained optimization in \cite{zala2020structure} and the norm-constrained optimization proposed in our work. We consider the case $(a,b) = [-1,1]$ and degree-$(N-1)$ polynomial approximations with $V = V^{\text{poly}}$. The test functions are chosen as a step function and its antiderivatives:
\begin{align}\label{eq:stepvariants}
    &u_{j+1}(x) \coloneqq c_{j+1}\int_{-1}^{x}u_j(t)dt,
    &&u_0(x) = \left\{
    \begin{aligned}
    &0,&&x\le 0,\\   
    &1,&&x> 0,
    \end{aligned}
    \right.
\end{align}
where $c_{j+1}$ is a normalized constant that ensures the $u_{j+1}(1) = 1$. 
We provide a summary of the performance of our three proposed approaches for norm-preserving optimization \Cref{tab:performance}. We observe that, compared to greedy approach, there is a slight decrease in the number of iterations. Both greedy approach and the hybrid approach are much faster than the average approach. The relative error of the three proposed approaches are comparable.
\begin{table}[htbp]
    \centering
        \begin{tabular}{ccccc}    \toprule
            &\multicolumn{2}{c}{$N = 6$}&\multicolumn{2}{c}{$N = 31$}\\
            &$I$&$\eta$&$I$&$\eta$\\\midrule
            Greedy&17&1.1479&23&0.9859\\
            Average&87&1.1483&220&0.9856\\
            Hybrid&15&1.1494&22&0.9885\\
            \hline
         \bottomrule
        \end{tabular}
    \caption{Comparison of number of iterations ($I$) and relative errors ($\eta$)  on the test function $u = u_2$ with positivity-constraint imposed for different values of $N$ and different algorithms. The ambient Hilbert space is $H = L^2([-1,1])$.}
    \label{tab:performance}
\end{table}

\subsection{Polynomial Space Approximation Example}\label{subsect:numeric-comp}
In this section, we continue to consider the approximation using $(a,b) = (-1,1)$ and $V = V^{\text{poly}}$. In our first experiment, we test the capability of our algorithms for approximating the step function $u_{0}(x)$ in \eqref{eq:stepvariants} and the similarity between $\vlc$ and $\vnc$.  We compute the approximation for $N = 6$ and $N = 31$ \Cref{fig:ex1-1-and-2} and consider three choices of linear constraint sets $E^0$ introduced in \eqref{eq:unionlconstraints}: 
\begin{enumerate}[(i)]
    \item (positivity) $E^0 = U_0$,
    \item (positivity and monotonicity) $E^0 = U_0\bigcap U_1$, and
    \item (positivity, monotonicity, boundedness) $E^0 = U_0\bigcap U_1\bigcap G_0$.
\end{enumerate}
We note that, for (i) and (ii), the norm-constrained solutions are simply slight adjustment of the linearly-constrained solution, both visually and quantitatively. The discrepancy is more obvious in case (iii). Increasing the order in the polynomial approximation further decreases the discrepancy between $\vlc$ and $\vnc$. We also note that, the Gibbs'-type oscillations presented in the left column of \Cref{fig:ex1-1-and-2} can be 
alleviated by enforcing the monotonicity and the boundedness constraint. All computed $\eta_{\text{NC}}$ values are order $1$, which shows that our norm-preserving approximations are comparable to the $H$-best approximation. 
 
\begin{figure}[htbp]
    \centering
    \subfloat{\includegraphics[width=0.33\textwidth]{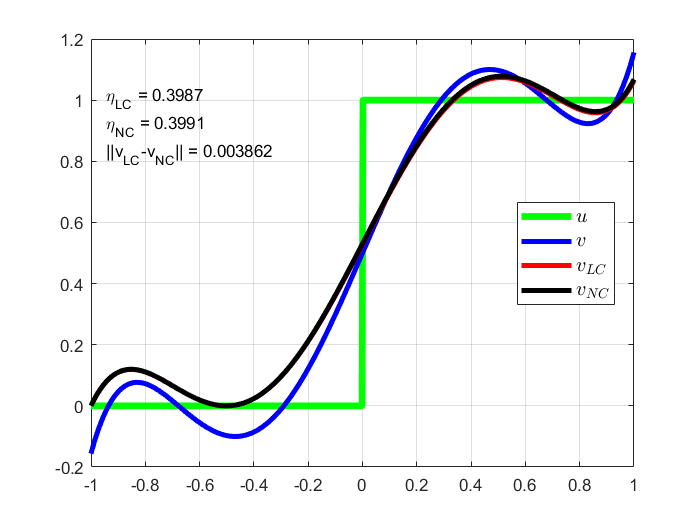}}
    \subfloat{\includegraphics[width=0.33\textwidth]{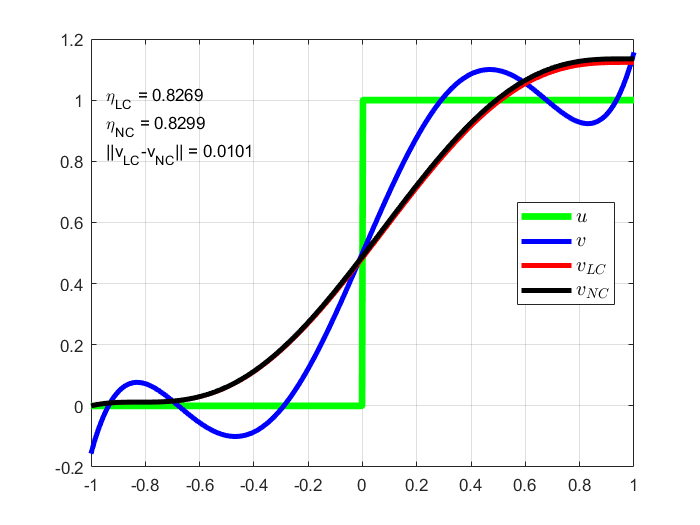}}
    \subfloat{\includegraphics[width=0.33\textwidth]{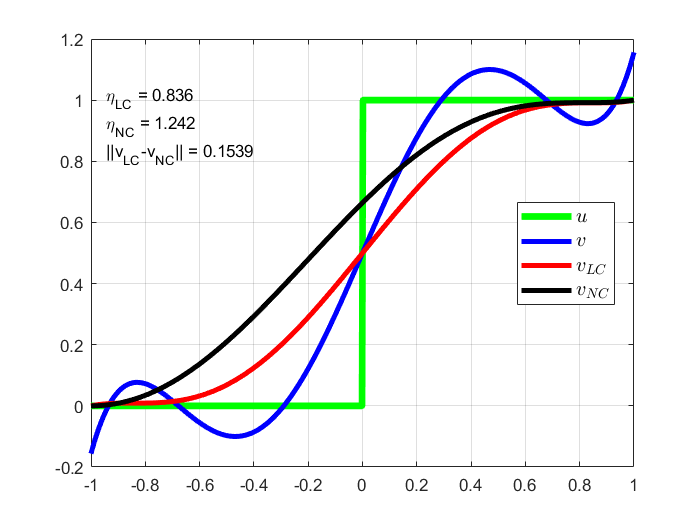}}\\
    \subfloat{\includegraphics[width=0.33\textwidth]{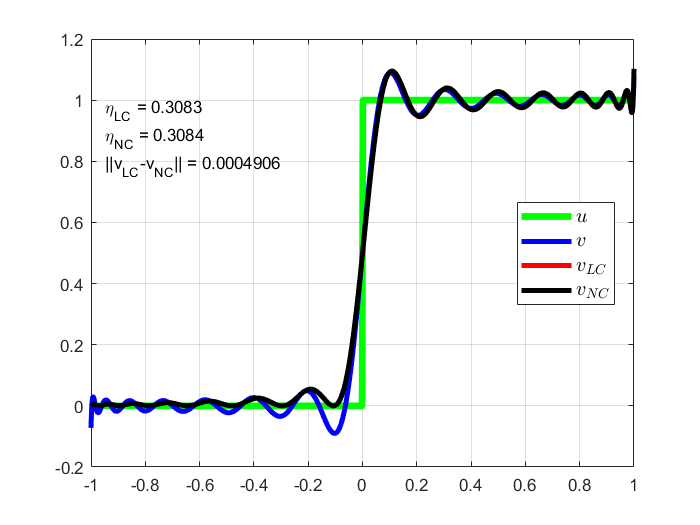}}
    \subfloat{\includegraphics[width=0.33\textwidth]{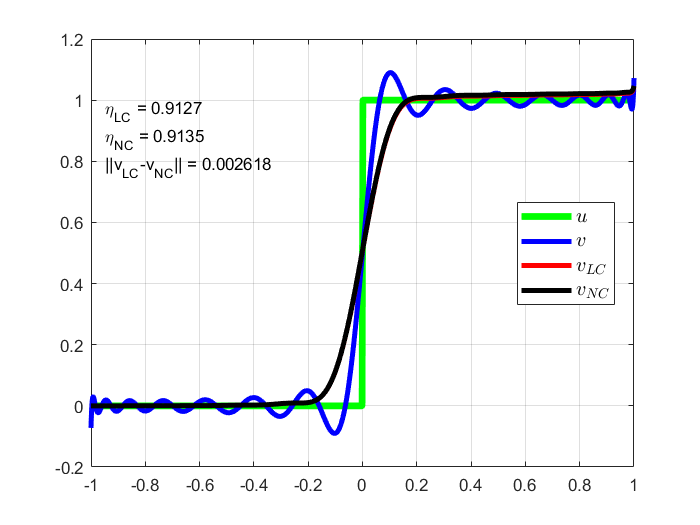}}
    \subfloat{\includegraphics[width=0.33\textwidth]{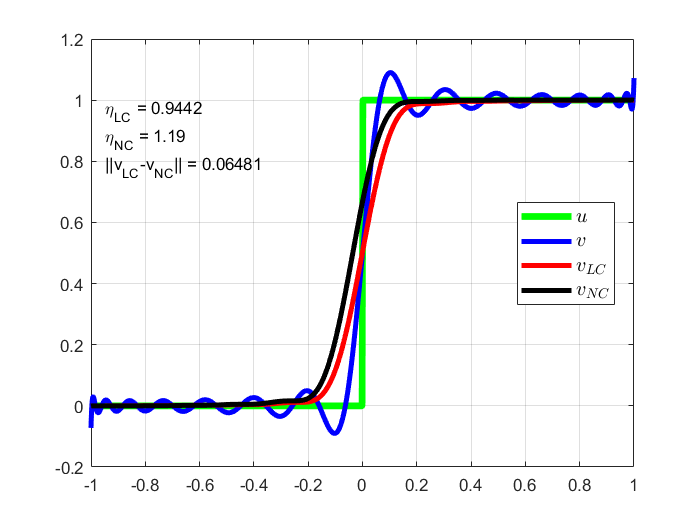}}\\    
    \caption{Greedy algorithm results: Comparison of different methods: degree $5$ polynomial positivity-preserving approximation to the step function for different constraints and different polynomial spaces. Left: constraint $U_0$. Middle: constraint $U_0\bigcap U_1$. Right: $U_0\bigcap U_1\bigcap G_0$. Top: $N = \mathrm{dim} V = 6$. Bottom: $N = \mathrm{dim} V = 31$.}
    \label{fig:ex1-1-and-2}
\end{figure}

In the second experiment of this section, we investigate how the choice of ambient Hilbert space $H$ affects the accuracy of the approximation. We approximate the function $u_2(x)$ with linear constraint set $E^0 = U_0\bigcap U_1\bigcap U_2$ for $N = 6$ and $N=31$ on different Hilbert spaces $H = H^0, H^1,$ and $H^2$. We observe relatively large values of both $\eta_{\text{NC}}$ and $\eta_{\text{LC}}$, but increasing the regularity of the Hilbert space and/or increasing the order of the polynomial can reduce these relative errors.
Similar to the previous test, the discrepancy between $\vlc$ and $\vnc$ decreases as the order of polynomial order increases. It increases as the complexity of the linear constraint set increases. Nevertheless, both approximations are qualitatively good for $N = 31$.

\begin{figure}[htbp]
    \centering
    \subfloat{\includegraphics[width=0.33\textwidth]{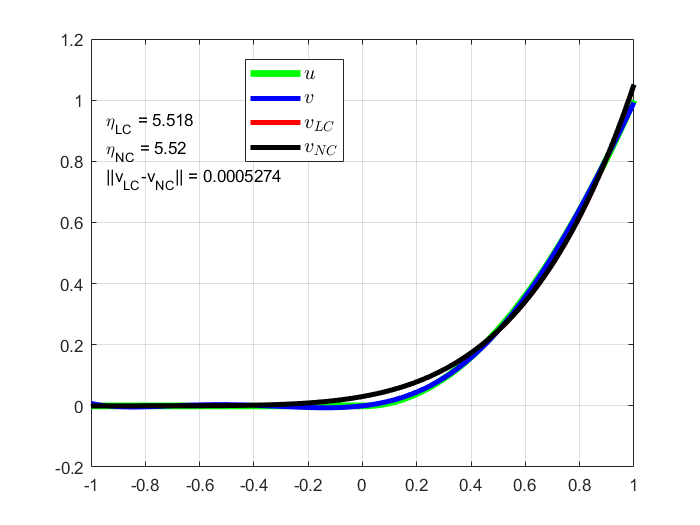}}
    \subfloat{\includegraphics[width=0.33\textwidth]{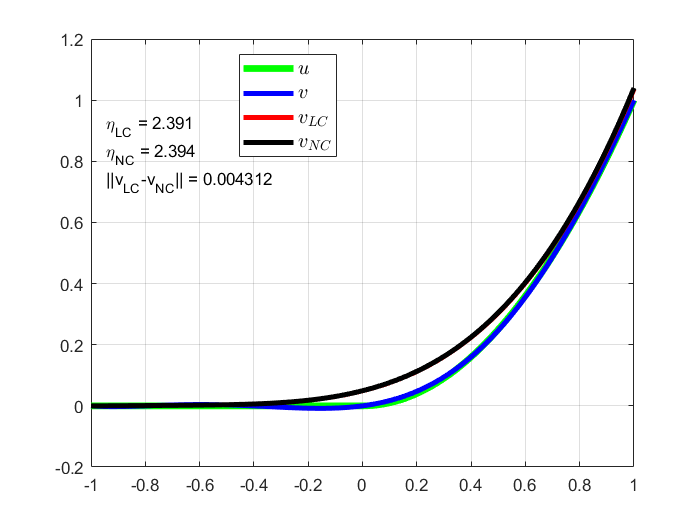}}
    \subfloat{\includegraphics[width=0.33\textwidth]{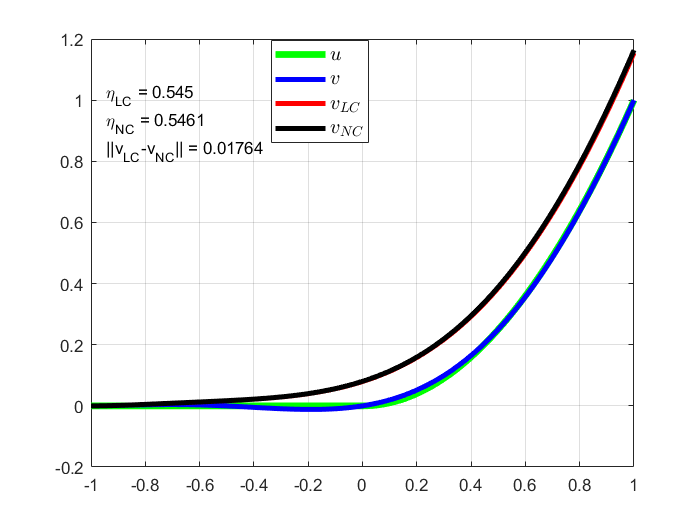}}\\    
    \subfloat{\includegraphics[width=0.33\textwidth]{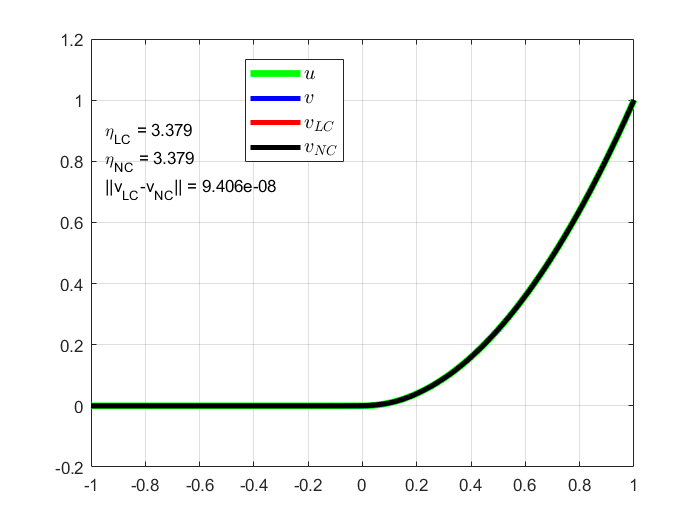}}
    \subfloat{\includegraphics[width=0.33\textwidth]{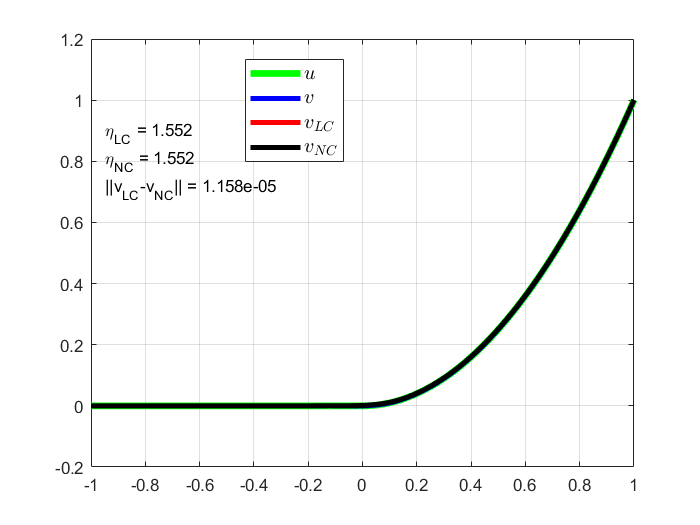}}
    \subfloat{\includegraphics[width=0.33\textwidth]{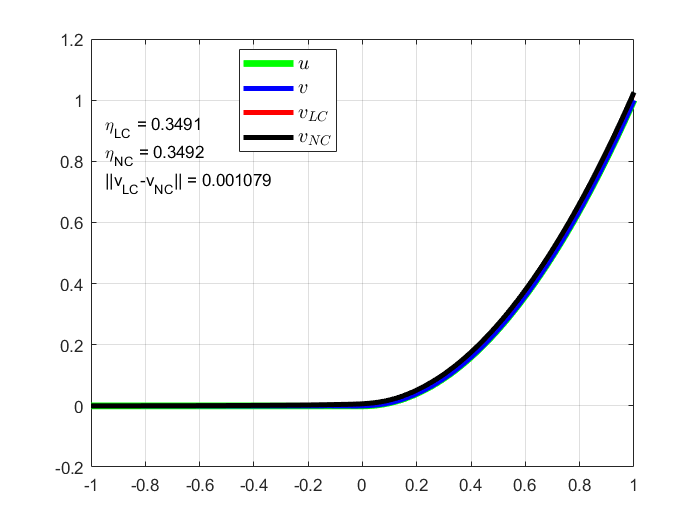}}\\    
    \caption{Comparison of different approximations to $u_2(x)$ for different ambient Hilbert spaces and different polynomial spaces. The red curves ($\vlc$) are covered by the black curves ($\vnc$). See also \Cref{tab:vlc-vnc}. The constraint is $U_0\bigcap U_1\bigcap U_2$. Left: $H = H^0$. Middle: $H = H^1$. Right: $H = H^2$. Top: $N = \mathrm{dim} V = 6$. Bottom: $N = \mathrm{dim} V = 31$.}
    \label{fig:ex2-1-and-2-and-3}
\end{figure}

Quantitatively, we find that the 
minimum values of $v_{NC}, v'_{NC},$ and $v''_{NC}$ converge slowly for some examples with less regular Hilbert space $H = H^0, H^1$. Among our three proposed approach, the greedy approach and the hybrid approach performs slightly better than the average approach. For different choices of the ambient Hilbert spaces, we report in \Cref{tab:min_val} the minimum values of $v_{NC}, v'_{NC},$ and $v''_{NC}$ using average approach at $10,000$ iterations. The ``converge'' in \Cref{tab:min_val} indicates that the procedures achieve the desired tolerance levels. We note that, by increasing the regularity of the ambient Hilbert space, our procedures can identify a feasible solution much faster. We emphasize that the simplicity of this example belies the complexity and difficulty of the geometry of the problem, which is evidenced by algorithms requiring more iterations to complete. In the remaining examples of this paper, all the algorithms identify an element of the feasible set (to within precision tolerances).

\begin{table}[htbp]
    \centering
        \begin{tabular}{ccccccc}    \toprule
            &\multicolumn{3}{c}{$N = 6$}&\multicolumn{3}{c}{$N = 31$}\\
            &$v_{NC}$&$v'_{NC}$&$v''_{NC}$&$v_{NC}$&$v'_{NC}$&$v''_{NC}$\\\midrule
            $H^0$&-1.05e-6&9.35e-5&-1.86e-4&-2.18e-7&-2.35e-3&-3.06e-2\\
            $H^1$&converge&converge&converge&-1.33e-8&-3.84e-7&-1.51e-3\\
            $H^2$&converge&converge&converge&converge&converge&converge\\
            \hline
         \bottomrule
        \end{tabular}
    \caption{Comparison of the minimum values of $v_{NC}, v'_{NC},$ and $v''_{NC}$ using average approach at $10,000$ iterations. ``Converge'' indicates the corresponding procedure finds a feasible solution before the maximum number of iterations is reached.}
    \label{tab:min_val}
\end{table}
\subsection{Constrained Approximation as a Nonlinear Filter}
As pointed out in \cite{zala2020structure}, the original linearly-constrained optimization can be viewed as a nonlinear filter when the linear constraint set $C^0$ contains the origin. Our norm-constrained optimization can also be viewed as a nonlinear filter in the sense that it modifies oscillations to preserve structure. However, we preserve the energy of the approximation, so that it is non-dissipative. In this section, we compare the spectral energy of the approximations $v$, $\vlc$, and $\vnc$ using the examples in \Cref{subsect:numeric-comp}. From \Cref{fig:ex1-1-and-2-bars}, the imposed monotonicity constraint significantly reduces the high-order coefficients in both $\vlc$ and $\vnc$, while the low-order coefficients remain unchanged or are increased. When boundedness constraint imposed, the discrepancy between the coefficients of $\vnc$ and $\vlc$ versus $v$ increases significantly. For most cases in Figure \ref{fig:ex1-1-and-2-bars}, the difference between coefficients of $\vnc$ and $\vlc$ are only slight, which matches the discrepancy between them shown in \Cref{fig:ex1-1-and-2}. From \Cref{fig:ex2-1-and-2-and-3-bars}, we find that the increase in the regularity of the ambient Hilbert spaces will increase the low-order coefficients in both $\vlc$ and $\vnc$. 
\begin{figure}[htbp]
    \centering
    \subfloat{\includegraphics[width=0.33\textwidth]{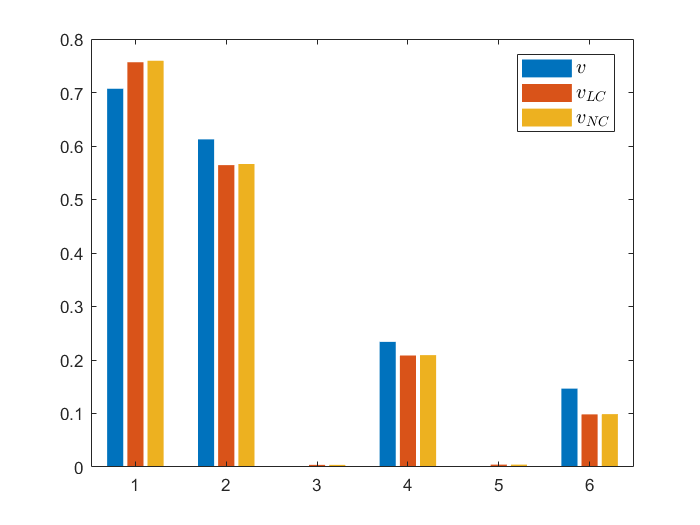}}
    \subfloat{\includegraphics[width=0.33\textwidth]{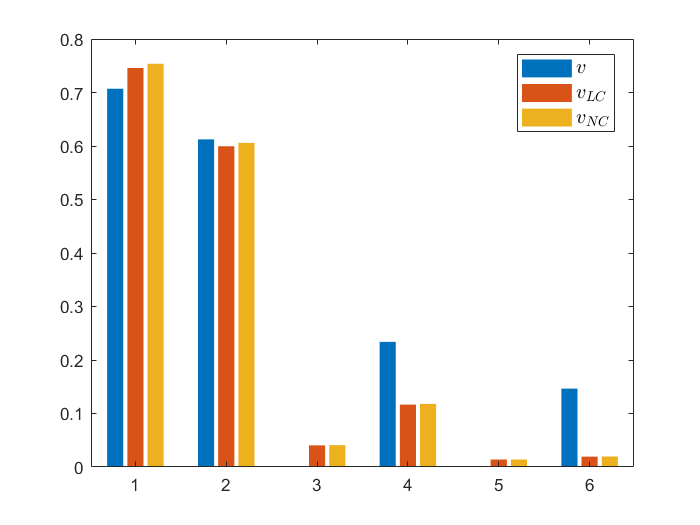}}
    \subfloat{\includegraphics[width=0.33\textwidth]{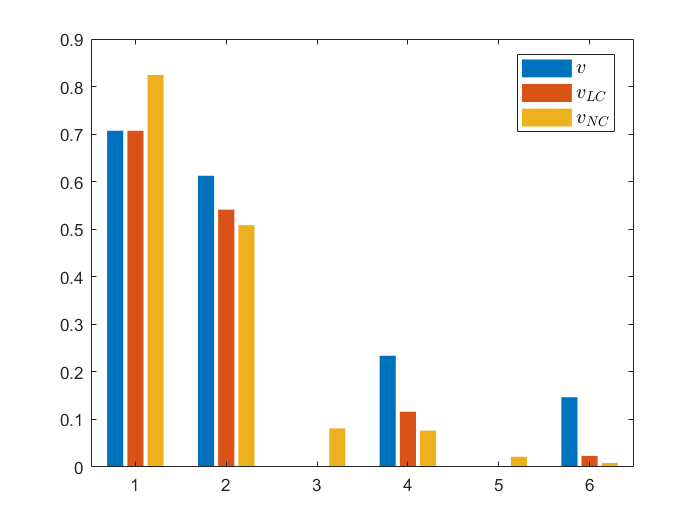}}    
    \\
    \subfloat{\includegraphics[width=0.33\textwidth]{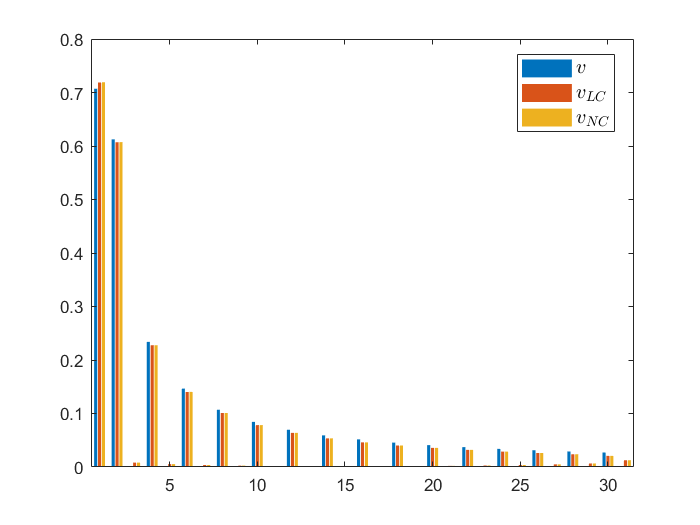}}
    \subfloat{\includegraphics[width=0.33\textwidth]{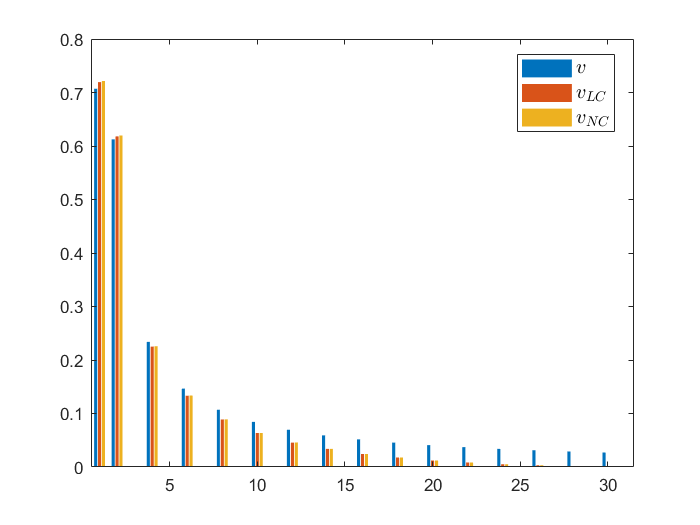}}
    \subfloat{\includegraphics[width=0.33\textwidth]{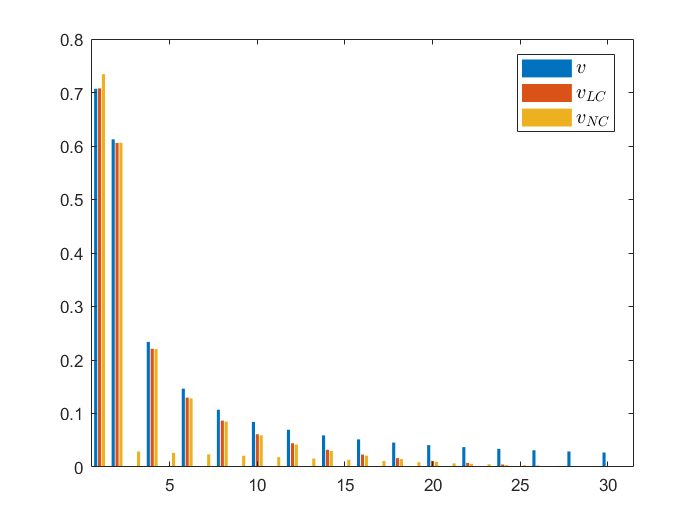}}    
    \\    
    \caption{Companion to \Cref{fig:ex1-1-and-2}. Bar plots showing the magnitudes of the unconstrained projection coefficients $|\widehat{v}_j|$, the linearly-constrained coefficients $|\widehat{\vlc}_j|$, and the norm-constrained coefficients $|\widehat{\vnc}_j|$. The $x$-axis is the index of a coefficient. Left: constraint $U_0$. Middle: constraint $U_0\bigcap U_1$. Right: $U_0\bigcap U_1\bigcap G_0$. Top: $N = \mathrm{dim} V = 6$. Bottom: $N = \mathrm{dim} V = 31$.}
    \label{fig:ex1-1-and-2-bars}
\end{figure}
\begin{figure}[htbp]
    \centering
    \subfloat{\includegraphics[width=0.33\textwidth]{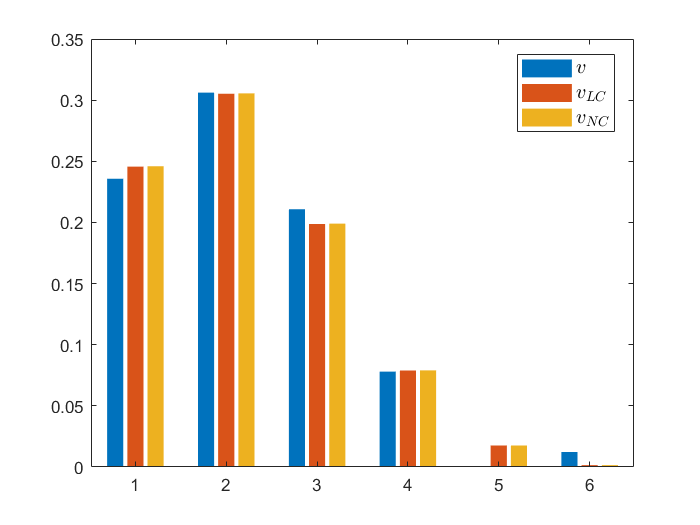}}
    \subfloat{\includegraphics[width=0.33\textwidth]{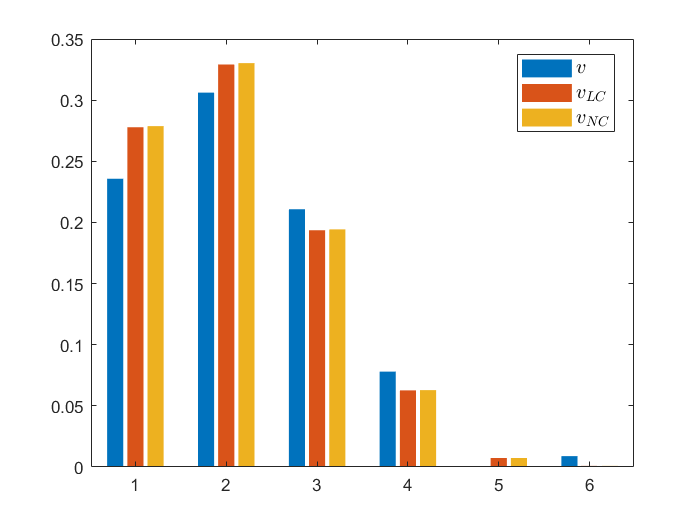}}
    \subfloat{\includegraphics[width=0.33\textwidth]{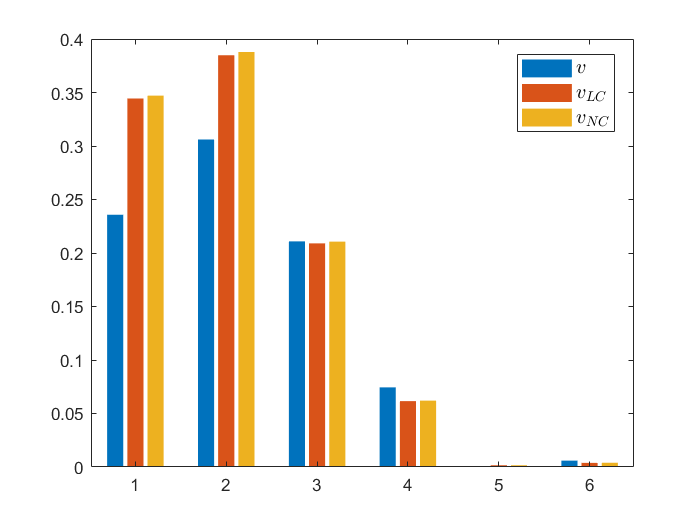}}\\    
    \subfloat{\includegraphics[width=0.33\textwidth]{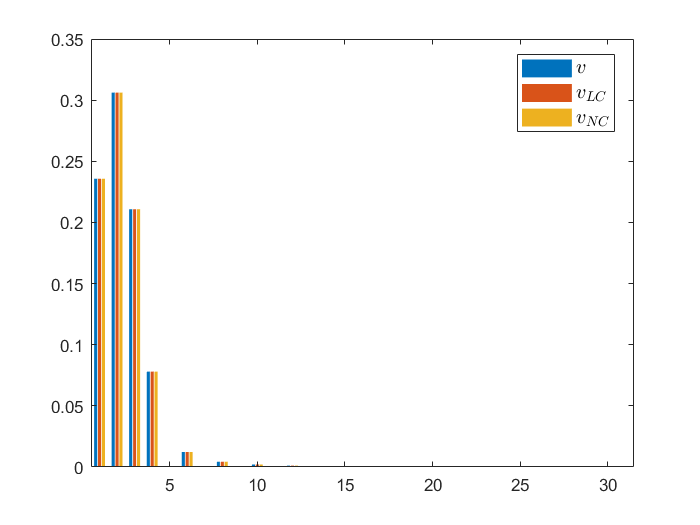}}
    \subfloat{\includegraphics[width=0.33\textwidth]{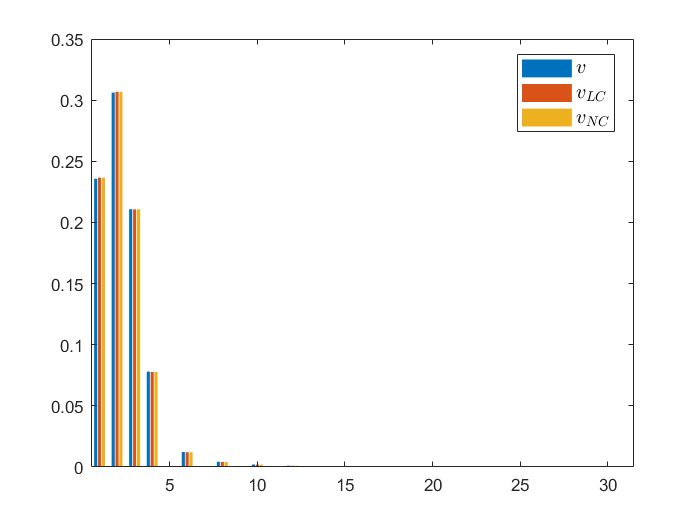}}
    \subfloat{\includegraphics[width=0.33\textwidth]{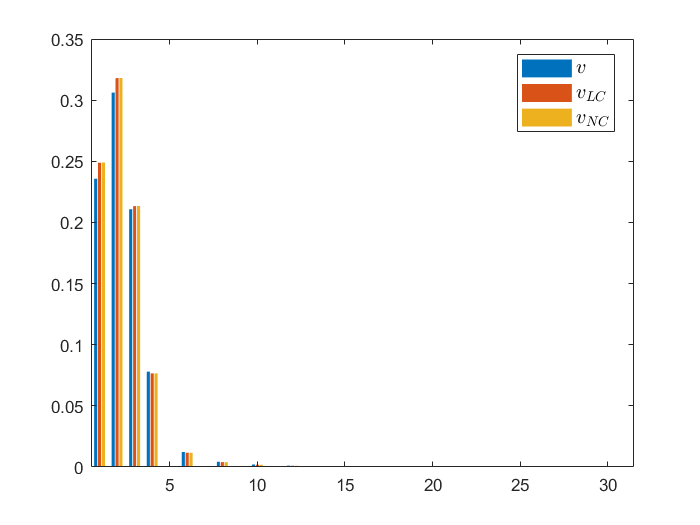}}\\    
    \caption{Companion to \Cref{fig:ex2-1-and-2-and-3}. Bar plots showing the magnitudes of unconstrained projection coefficients $|\widehat{v}_j|$, the linearly-constrained coefficients $|\widehat{\vlc}_j|$, and the norm-constrained coefficients $|\widehat{\vnc}_j|$. The $x$-axis is the index of a coefficient. Left: $H = H^0$. Middle: $H = H^1$. Right: $H = H^2$. Top: $N = \mathrm{dim} V = 6$. Bottom: $N = \mathrm{dim} V = 31$.}
    \label{fig:ex2-1-and-2-and-3-bars}
\end{figure}
\subsection{Convergence Rate}
In this subsection, we compare the rates of convergence between the unconstrained solution and the norm-constrained solution. We consider $u = u_0(x)$ and $u = u_2(x)$ using $V = V^{\text{poly}}$. The ambient Hilbert space is $H = L^2([-1,1])$. We compute the rate of convergence on the constrained sets $U_0$, $U_0\bigcap U_1$, and $U_0\bigcap U_1\bigcap G_0$.
We observe from \Cref{fig:ex1-and-2-conv} that our norm-constrained solutions have a similar rate of convergence to the unconstrained ($H = L^2$-optimal) solution $u$ (even when a boundedness constraint is imposed).
\begin{figure}[htbp]
    \centering
    \subfloat{\includegraphics[width=0.33\textwidth]{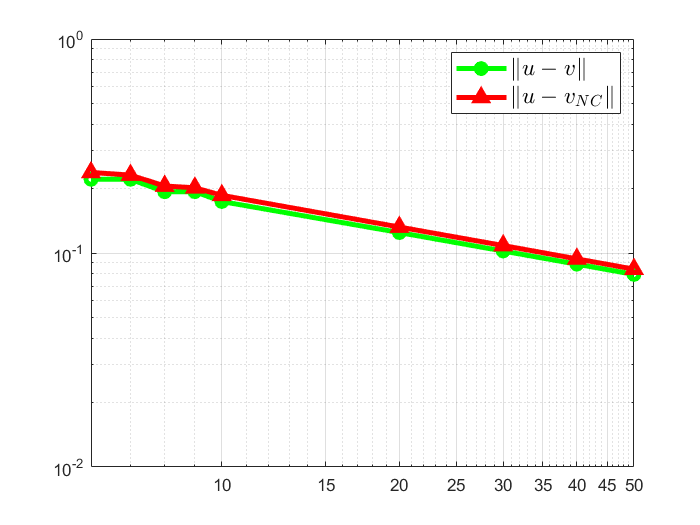}}
    \subfloat{\includegraphics[width=0.33\textwidth]{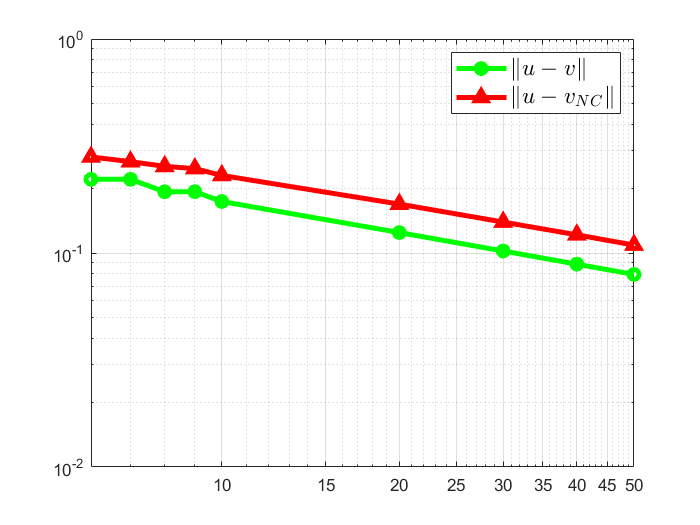}}
    \subfloat{\includegraphics[width=0.33\textwidth]{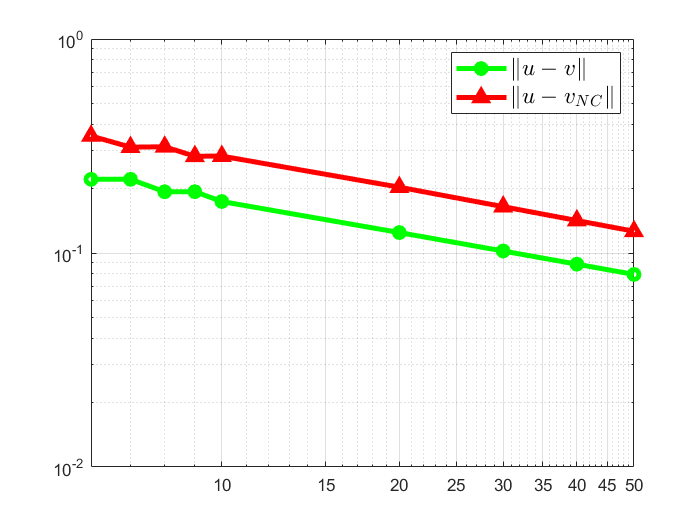}}\\
    \subfloat{\includegraphics[width=0.33\textwidth]{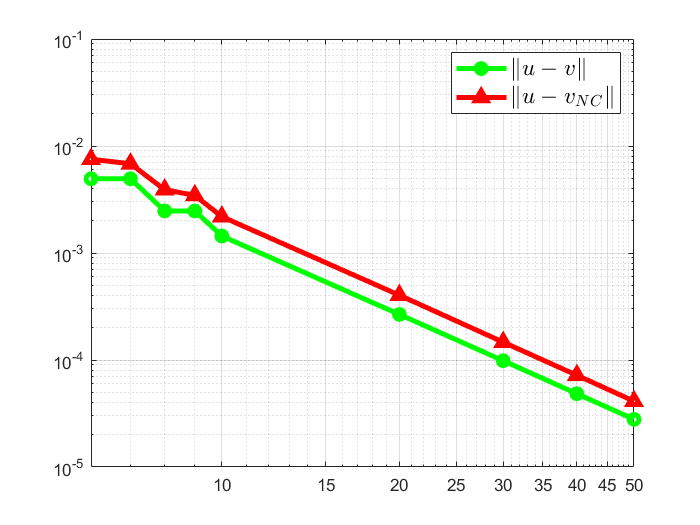}}
    \subfloat{\includegraphics[width=0.33\textwidth]{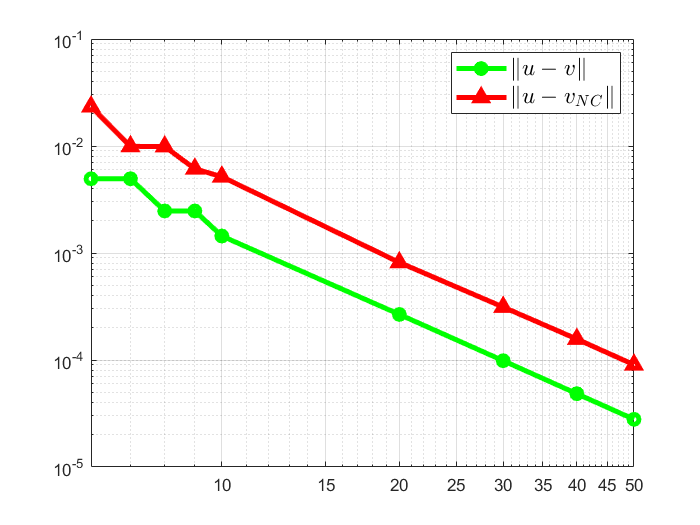}}
    \subfloat{\includegraphics[width=0.33\textwidth]{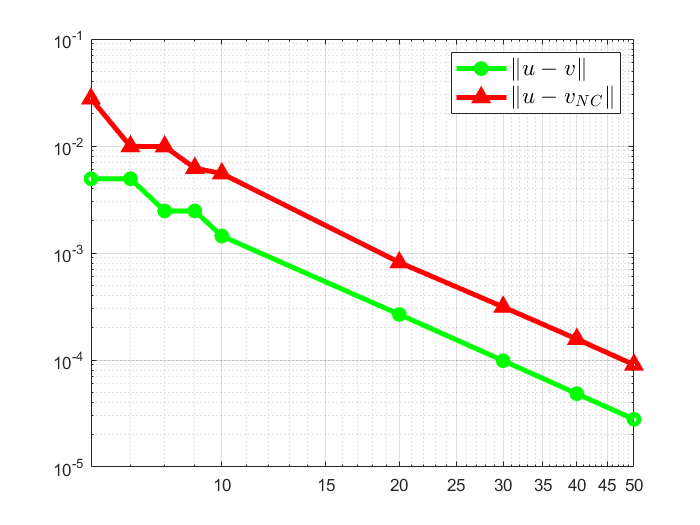}}        
    \caption{Rate of convergence. Approximations to $u = u_0(x)$ (top) and $u = u_2(x)$ (bottom) with $U_0$ (left), $U_0\bigcap U_1$ (middle), and $U_0\bigcap U_1\bigcap G_0$ (right) imposed. The $x$-axis indicates the dimension of the polynomial space $V = V^{\text{poly}}$. The ambient Hilbert space is $H = L^2([-1,1])$.}
    \label{fig:ex1-and-2-conv}
\end{figure}
\subsection{Approximation to A Highly-Oscillatory Function}
In this section, we will approximate the function 
\begin{align}\label{eq:oscifun}
    u(x) = x^2\sin^2\left(\frac{1}{0.01+x^2}\right),
\end{align}
using the polynomial subspace $V = V^{\text{poly}}$ with positivity constraint $U_0$ imposed. We present the results in \Cref{fig:ex3}. From \Cref{fig:ex3}, as the dimension $N$ of $V$ increases, the approximations capture the oscillatory behavior of the original function better. In all subfigures, the red curves ($\vlc$) are covered by the black curves ($\vnc$). In general, the discrepancy between the two solutions remain small and decreases as the order of the polynomial increases.
\begin{figure}[htbp]
    \centering
    \subfloat{\includegraphics[width=0.33\textwidth]{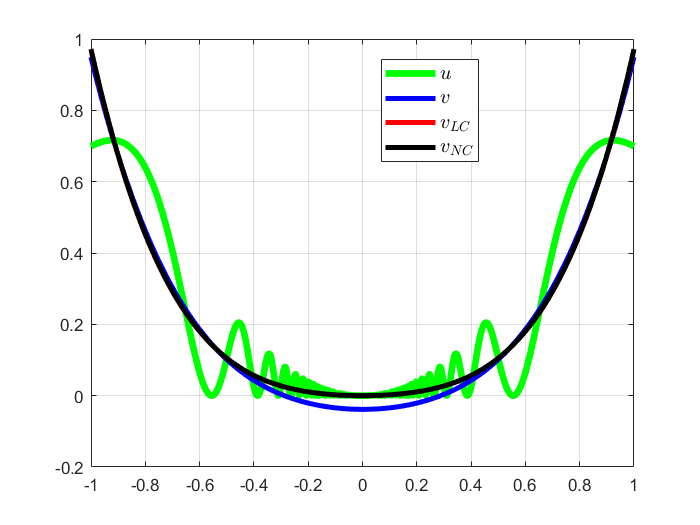}}
    \subfloat{\includegraphics[width=0.33\textwidth]{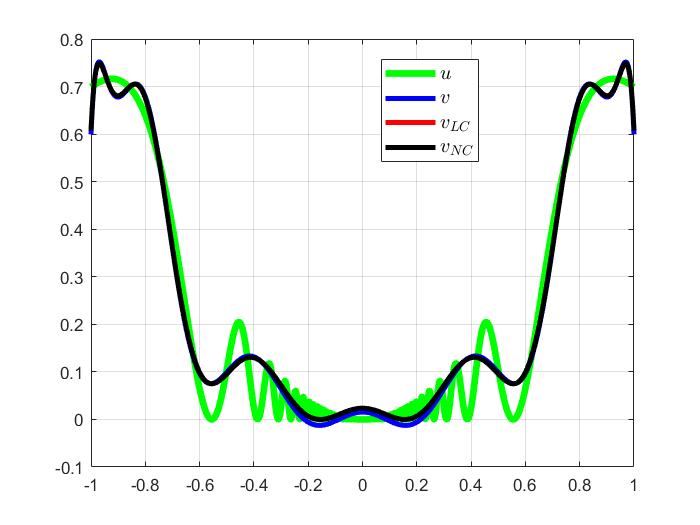}}
    \subfloat{\includegraphics[width=0.33\textwidth]{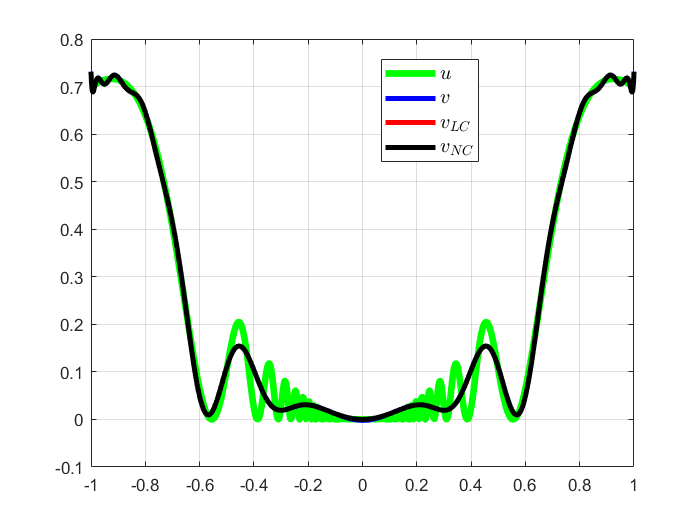}}\\    
    \subfloat{\includegraphics[width=0.33\textwidth]{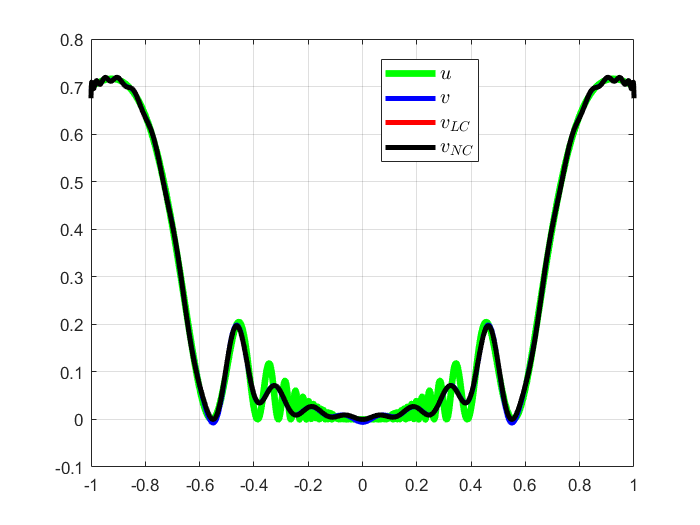}}
    \subfloat{\includegraphics[width=0.33\textwidth]{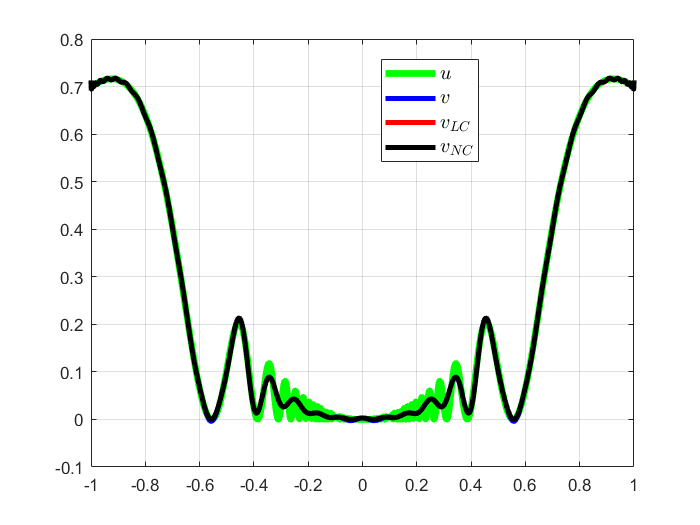}}
    \subfloat{\includegraphics[width=0.33\textwidth]{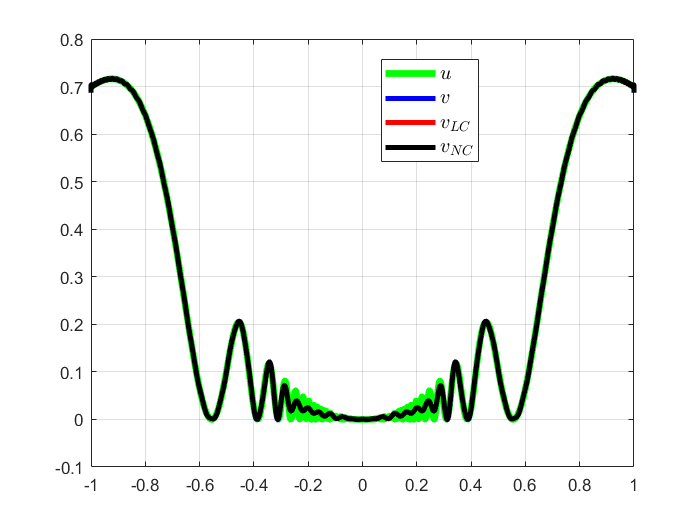}}\\    
    \caption{Comparison of the approximations to \eqref{eq:oscifun} with different $N$. Constraint: $U_0$, positivity-preserving. Top (from left to right): $N = 6, 16, 31$. Bottom (from left to right): $N = 51, 76, 151$. The red curves ($\vlc$) are covered by the black curves ($\vnc$). See also \Cref{tab:vlc-vnc}.}
    \label{fig:ex3}
\end{figure}
\begin{table}[h]
    \centering
    \begin{tabular}{c|c|c|c|c|c|c}
    \toprule
         $N$&  6 & 16 & 31 & 51 & 76 &151\\
         \hline
         $\Vert\vlc-\vnc\Vert$&8.51e-4&7.48e-5&3.38e-7&8.3e-6&1.61e-6&8.22e-8\\
         \bottomrule
    \end{tabular}
    \caption{The decrease in the discrepancy $\Vert\vlc-\vnc\Vert$ with $N$ increasing}
    \label{tab:vlc-vnc}
\end{table}
\subsection{M-shape Function Using Cosine Basis}
In this section, we will choose $V = V^{\cos}$ for approximating an M-shape function defined on $[0,\pi]$,
\begin{equation}\label{eq:M-shape}
    u(x) = \left\{\begin{aligned}
    &-\left(x-\frac{\pi}{8}\right)\left(x-\frac{\pi}{2}\right)&&\frac{\pi}{8}\le x < \frac{\pi}{2},\\
    &-\left(x-\frac{\pi}{2}\right)\left(x-\frac{7\pi}{8}\right)&&\frac{\pi}{2}\le x < \frac{7\pi}{8},\\
    &0&&\text{otherwise},\\
    \end{aligned}\right.
\end{equation}
with positivity constraint $U_0$ imposed. For a cosine polynomial, the difficult part for applying our algorithm is to determine the $y$-parameter corresponding to the most violated constraint (or the negative $y$-region), which requires to find the zeros of a trigonometry polynomials. Fortunately, this difficulty can be resolved by taking advantage of the Chebyshev polynomials.
\begin{figure}[htbp]
    \centering
    \subfloat{\includegraphics[width=0.33\textwidth]{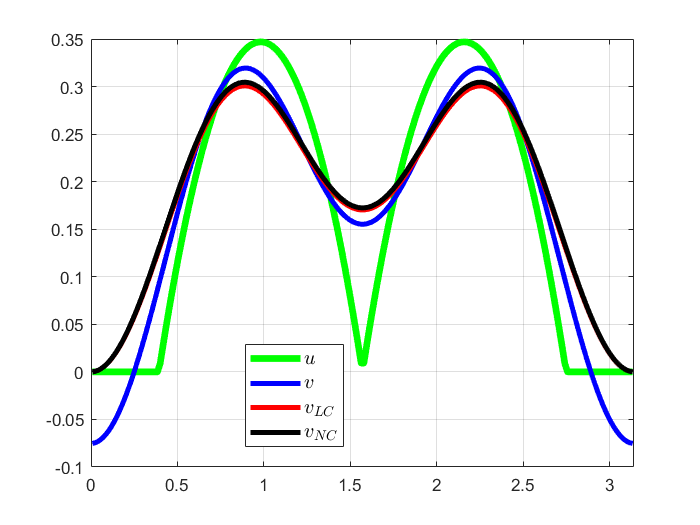}}
    \subfloat{\includegraphics[width=0.33\textwidth]{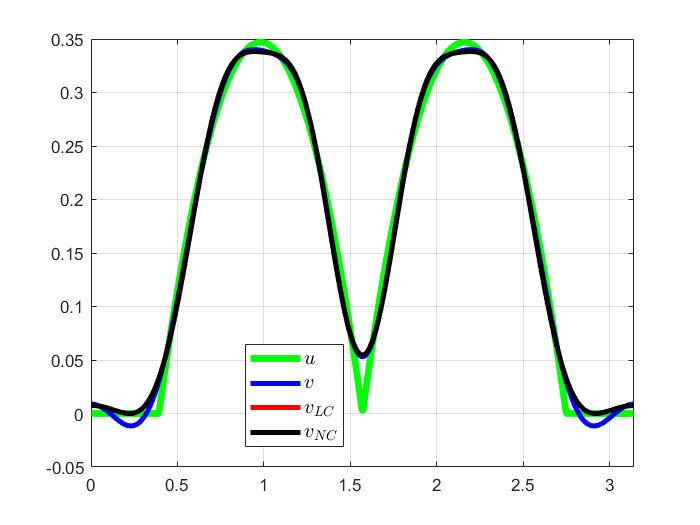}}
    \subfloat{\includegraphics[width=0.33\textwidth]{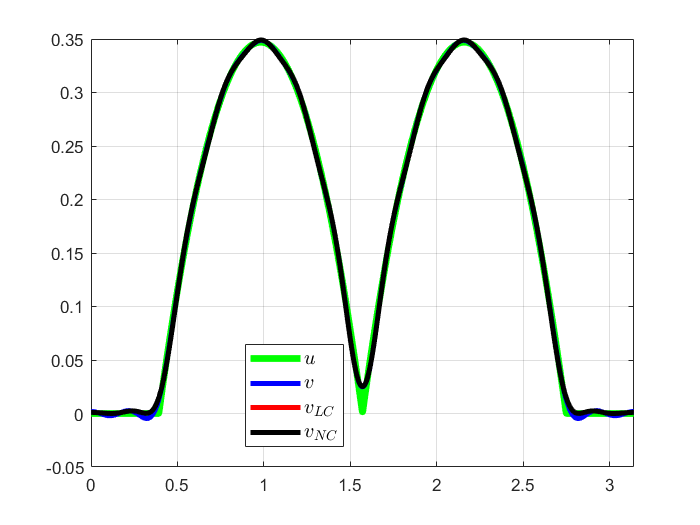}}\\    
    \caption{Comparison of the approximations to \eqref{eq:M-shape} with different $N$. Constraint: $U_0$, positivity-preserving. From left to right: $N = 6, 16, 31$.}
    \label{fig:ex4}
\end{figure}
\subsection{Two-dimensional cylinder indicator function}
In our last example, we consider the approximation to a cylinder 
\begin{equation}\label{eq:cylinder}
u(x,y) = \left\{
    \begin{aligned}
        & 1 && \text{if} \sqrt{(x-0.5)^2 + (y-0.5)^2}< 0.5,\\
        & 0 && \text{otherwise}.
    \end{aligned}\right.
\end{equation}
The computational domain is $[-1,1]\times [-1, 1]$, and the polynomial space is the tensor product space $V^{\text{poly}}\otimes V^{\text{poly}}$, where $N$ is chosen to be $15$. The positivity constraint $U_0$ is imposed. The computation requires to find the global minimum of a two-dimensional nonconvex function \eqref{eq:signedEdist} (see also \eqref{eq:positivityconstraint}-\eqref{eq:positivityriesz}). 
We use \textsc{Matlab}'s optimization function \texttt{fmincon}, using the sequential quadratic programming option, and approximate the global minimum by solving the optimization with several randomly initialized starting points. The constraints we set for \texttt{fmincon} are the boundaries for the computational domain.

The results are shown in \Cref{fig:ex-cylinder}. The numerical results show that our norm-constrained approximation can preserve both the positivity and the norm by ``correcting'' the linearly-constrained solution. The function is entirely non-negative for both the linearly constrained solution (bottom middle, \Cref{fig:ex-cylinder}) and the norm-constrained solution (bottom right, \Cref{fig:ex-cylinder}).
\begin{figure}[htbp]
    \centering
    \subfloat{\includegraphics[width=0.33\textwidth]{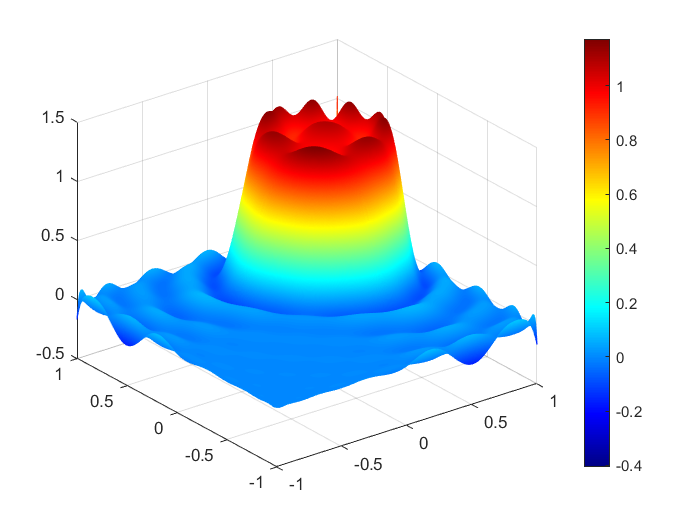}}
    \subfloat{\includegraphics[width=0.33\textwidth]{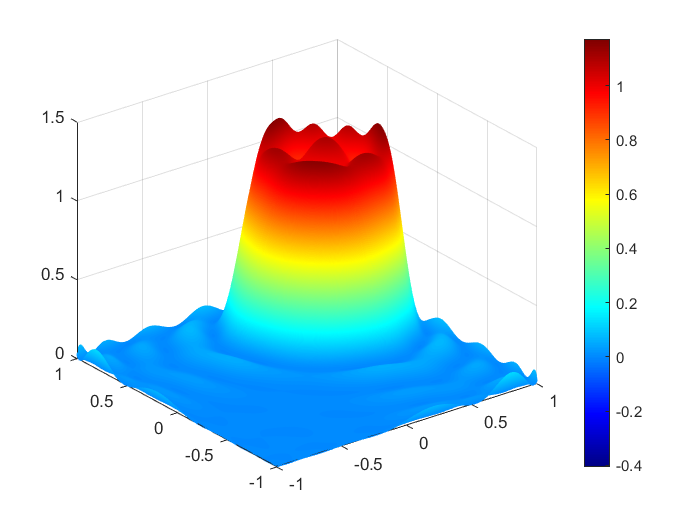}}
    \subfloat{\includegraphics[width=0.33\textwidth]{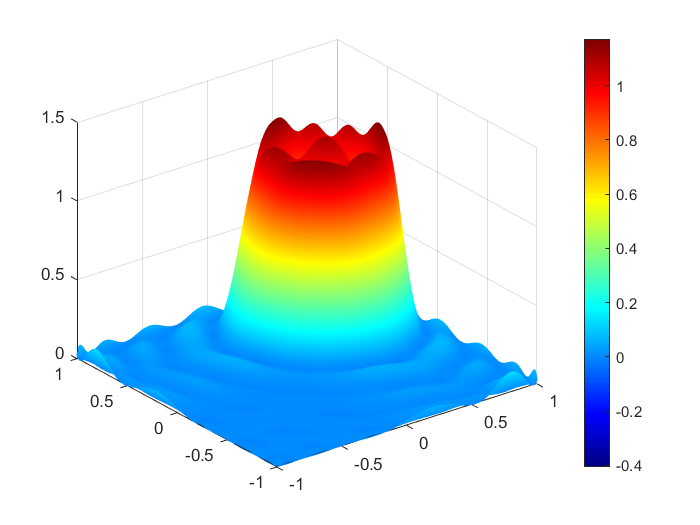}}\\    
    \subfloat{\includegraphics[width=0.33\textwidth]{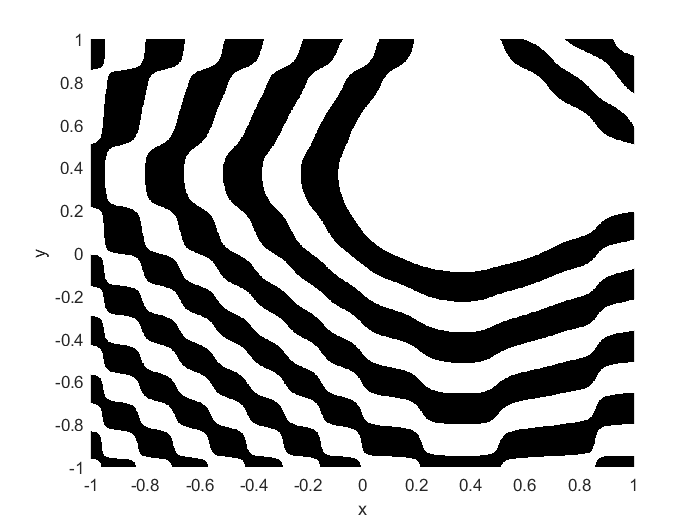}}
    \subfloat{\includegraphics[width=0.33\textwidth]{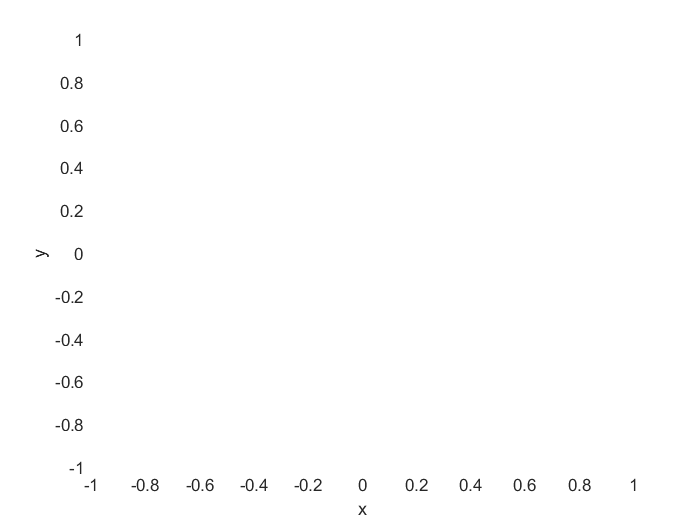}}
    \subfloat{\includegraphics[width=0.33\textwidth]{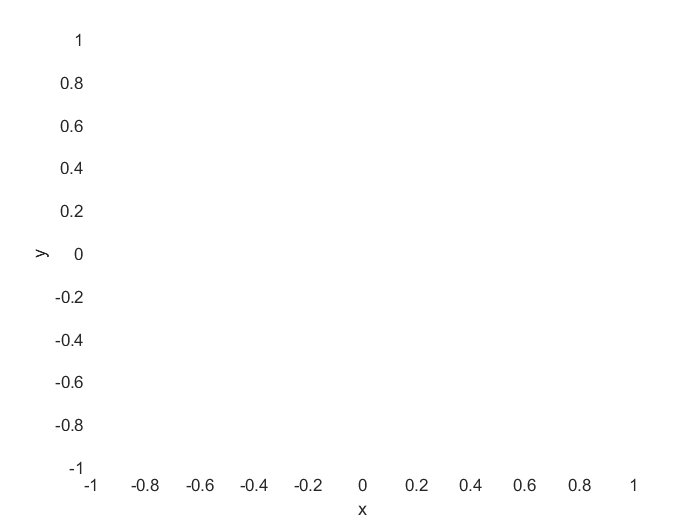}}\\    
    \caption{Comparison of different approximations to \eqref{eq:cylinder}, greedy procedure. Constraint: $U_0$, positivity-preserving. Top: mesh plot. Bottom: negative region indicator function, where the black region represent the region where the approximation is negative. Left: unconstrained solution $u$. Middle: linearly-constrained solution $v_{LC}$. Right: Norm-constrained solution $v_{NC}$. $\eta_{LC} = 0.1229, \eta_{NC} = 0.1230, \Vert v_{LC}-v_{NC}\Vert = 0.0030$.}
    \label{fig:ex-cylinder}
\end{figure}
\newpage
\bibliographystyle{unsrt}
\bibliography{bibfile}

\begin{thebibliography}{10}

\bibitem{zhang2011maximum}
Xiangxiong Zhang and Chi-Wang Shu.
\newblock {Maximum-principle-satisfying and positivity-preserving high-order
  schemes for conservation laws: survey and new developments}.
\newblock {\em Proceedings of the Royal Society A: Mathematical, Physical and
  Engineering Sciences}, 467(2134):2752--2776, 2011.

\bibitem{zala2020structure}
Vidhi Zala, Mike Kirby, and Akil Narayan.
\newblock {Structure-preserving function approximation via convex
  optimization}.
\newblock {\em SIAM Journal on Scientific Computing}, 42(5):A3006--A3029, 2020.

\bibitem{hettich1993semi}
Rainer Hettich and Kenneth~O Kortanek.
\newblock {Semi-infinite programming: theory, methods, and applications}.
\newblock {\em SIAM review}, 35(3):380--429, 1993.

\bibitem{gander1989constrained}
Walter Gander, Gene~H Golub, and Urs Von~Matt.
\newblock {A constrained eigenvalue problem}.
\newblock {\em Linear Algebra and its applications}, 114:815--839, 1989.

\bibitem{hager2001minimizing}
William~W Hager.
\newblock {Minimizing a quadratic over a sphere}.
\newblock {\em SIAM Journal on Optimization}, 12(1):188--208, 2001.

\bibitem{rendl1997semidefinite}
Franz Rendl and Henry Wolkowicz.
\newblock {A semidefinite framework for trust region subproblems with
  applications to large scale minimization}.
\newblock {\em Mathematical Programming}, 77(1):273--299, 1997.

\bibitem{sorensen1997minimization}
Danny~C Sorensen.
\newblock {Minimization of a large-scale quadratic functionsubject to a
  spherical constraint}.
\newblock {\em SIAM Journal on Optimization}, 7(1):141--161, 1997.

\bibitem{quispel2008new}
GRW Quispel and David~Ian McLaren.
\newblock {A new class of energy-preserving numerical integration methods}.
\newblock {\em Journal of Physics A: Mathematical and Theoretical},
  41(4):045206, 2008.

\bibitem{celledoni2009energy}
Elena Celledoni, Robert~I McLachlan, David~I McLaren, Brynjulf Owren,
  G~Reinout~W Quispel, and William~M Wright.
\newblock {Energy-preserving runge-kutta methods}.
\newblock {\em ESAIM: Mathematical Modelling and Numerical Analysis},
  43(4):645--649, 2009.

\bibitem{hairer2010energy}
Ernst Hairer.
\newblock {Energy-preserving variant of collocation methods}.
\newblock {\em Journal of Numerical Analysis, Industrial and Applied
  Mathematics}, 5:73--84, 2010.

\bibitem{besse2021energy}
Christophe Besse, St{\'e}phane Descombes, Guillaume Dujardin, and Ingrid
  Lacroix-Violet.
\newblock {Energy-preserving methods for nonlinear Schr{\"o}dinger equations}.
\newblock {\em IMA Journal of Numerical Analysis}, 41(1):618--653, 2021.

\bibitem{boyd2004convex}
Stephen Boyd, Stephen~P Boyd, and Lieven Vandenberghe.
\newblock {\em {Convex optimization}}.
\newblock Cambridge university press, 2004.

\bibitem{doi:10.1137/17M1131891}
{Campos-Pinto, Martin and Charles, Frédérique and Després, Bruno}.
\newblock Algorithms for positive polynomial approximation.
\newblock {\em SIAM Journal on Numerical Analysis}, 57(1):148--172, 2019.

\bibitem{berzins2007adaptive}
Martin Berzins.
\newblock {Adaptive polynomial interpolation on evenly spaced meshes}.
\newblock {\em Siam Review}, 49(4):604--627, 2007.

\bibitem{devore1974degree}
Ronald~A DeVore.
\newblock {Degree of monotone approximation}.
\newblock In {\em Linear Operators and Approximation II/Lineare Operatoren und
  Approximation II}, pages 337--351. Springer, 1974.

\bibitem{beatson1978degree}
Richard~Keith Beatson.
\newblock {The degree of monotone approximation}.
\newblock {\em Pacific Journal of Mathematics}, 74(1):5--14, 1978.

\bibitem{beatson1982restricted}
RK~Beatson.
\newblock {Restricted range approximation by splines and variational
  inequalities}.
\newblock {\em SIAM Journal on Numerical Analysis}, 19(2):372--380, 1982.

\bibitem{nochetto2002positivity}
Ricardo Nochetto and Lars Wahlbin.
\newblock {Positivity preserving finite element approximation}.
\newblock {\em Mathematics of computation}, 71(240):1405--1419, 2002.

\bibitem{ferreira2013projections}
Orizon~Pereira Ferreira, Alfredo~N Iusem, and Sandor~Z N{\'e}meth.
\newblock {Projections onto convex sets on the sphere}.
\newblock {\em Journal of Global Optimization}, 57(3):663--676, 2013.

\bibitem{ferreira2014concepts}
OP~Ferreira, AN~Iusem, and SZ~N{\'e}meth.
\newblock {Concepts and techniques of optimization on the sphere}.
\newblock {\em Top}, 22(3):1148--1170, 2014.

\bibitem{stein2012solve}
Oliver Stein.
\newblock {How to solve a semi-infinite optimization problem}.
\newblock {\em European Journal of Operational Research}, 223(2):312--320,
  2012.

\bibitem{goberna2013semi}
Miguel~{\'A}ngel Goberna and Marco~A L{\'o}pez.
\newblock {Semi-infinite programming: Recent advances}.
\newblock 2013.

\bibitem{trefethen_approximation_2012}
Lloyd~N. Trefethen.
\newblock {\em {Approximation {Theory} and {Approximation} {Practice}}}.
\newblock Society for Industrial and Applied Mathematics, 2012.

\bibitem{candes_robust_2006}
E.J. Candes, J.~Romberg, and T.~Tao.
\newblock {Robust uncertainty principles: exact signal reconstruction from
  highly incomplete frequency information}.
\newblock {\em IEEE Transactions on Information Theory}, 52(2):489--509, 2006.

\bibitem{donoho_compressed_2006}
D.L. Donoho.
\newblock {Compressed sensing}.
\newblock {\em IEEE Transactions on Information Theory}, 52(4):1289--1306,
  2006.

\bibitem{cohen_compressed_2009}
Albert Cohen, Wolfgang Dahmen, and Ronald DeVore.
\newblock {Compressed sensing and best k-term approximation}.
\newblock {\em Journal of the American Mathematical Society}, 22(1):211--231,
  2009.

\bibitem{buss2001spherical}
SAMUEL~R BUSS and JAY~P FILLMORE.
\newblock {Spherical Averages and Applications to Spherical Splines and
  Interpolation}.
\newblock {\em ACM Transactions on Graphics}, 20(2):95--126, 2001.

\bibitem{krakowski2007computation}
Krzysztof~A Krakowski, Knut H{\"u}per, and J~Manton.
\newblock {On the computation of the Karcher mean on spheres and special
  orthogonal groups}.
\newblock {\em ROBOMAT 07}, page 119, 2007.

\end{thebibliography}
\newpage
\appendix
\listoftables
\listoffigures
\end{document}